\documentclass[copyright,creativecommons]{eptcs}


\usepackage{iftex}
\usepackage{underscore}
\usepackage[T1]{fontenc}

\usepackage{amsmath}
\usepackage{amsthm}
\usepackage{mathtools}

\usepackage[inline]{enumitem}
\usepackage[utf8]{inputenc} 

\usepackage[capitalize]{cleveref}
\usepackage[style=eptcs,related=false,doi=true]{biblatex} 

\usepackage{tikz}
\usepackage{tikz-cd}

\usepackage{amssymb}
\usepackage{wasysym}
\usepackage{float}
\usepackage{dutchcal} 
\usepackage{mathbbol} 
    \DeclareSymbolFontAlphabet{\mathbbl}{bbold}

  \crefformat{enumi}{\card#2#1#3}
  \crefalias{section}{section}

  \addbibresource{dial-poly.bib} 

  \hypersetup{final}
  \hypersetup{hidelinks}

  \setlist{nosep}
  \setlistdepth{6}

  \usetikzlibrary{ 
  	cd,
  	math,
  	decorations.markings,
		decorations.pathreplacing,
  	positioning,
  	arrows.meta,
  	shapes,
		shadows,
		shadings,
  	calc,
  	fit,
  	quotes,
  	intersections
  }

\makeatletter
\let\tikz@lib@matrix@start@cell=\tikz@lib@matrix@normal@start@cell
\makeatother


\theoremstyle{definition}
\newtheorem{definitionx}{Definition}[section]

\theoremstyle{plain}

\newtheorem{theorem}[definitionx]{Theorem}
\newtheorem{proposition}[definitionx]{Proposition}

\newtheorem{lemma}[definitionx]{Lemma}

\newtheorem*{theorem*}{Theorem}
\newtheorem*{proposition*}{Proposition}
\newtheorem*{corollary*}{Corollary}
\newtheorem*{lemma*}{Lemma}
\newtheorem*{warning*}{Warning}

\newenvironment{example}
  {\pushQED{\qed}\examplex}
  {\popQED\endexamplex}

  \newenvironment{definition}
  {\pushQED{\qed}\definitionx}
  {\popQED\enddefinitionx}

	
\DeclareSymbolFont{stmry}{U}{stmry}{m}{n}
\DeclareMathSymbol\fatsemi\mathop{stmry}{"23}

\DeclareFontFamily{U}{mathx}{\hyphenchar\font45}
\DeclareFontShape{U}{mathx}{m}{n}{
      <5> <6> <7> <8> <9> <10>
      <10.95> <12> <14.4> <17.28> <20.74> <24.88>
      mathx10
      }{}
\DeclareSymbolFont{mathx}{U}{mathx}{m}{n}
\DeclareFontSubstitution{U}{mathx}{m}{n}
\DeclareMathAccent{\widecheck}{0}{mathx}{"71}

\newcommand{\eps}{\varepsilon}




\newcommand{\cat}[1]{\mathbbl{#1}} 
\newcommand{\ecat}[1]{\mathcal{#1}} 
\newcommand{\Cat}[1]{\mathbf{#1}} 



\renewcommand{\c}{\colon}

\renewcommand{\phi}{\varphi}
\newcommand{\Sig}{\Sigma}

\newcommand{\ol}{\overline}

\newcommand{\inj}{\hookrightarrow}
\newcommand{\cof}{\nrightarrow}

\newcommand{\To}[1]{\xrightarrow{#1}}


\newcommand{\tri}{\mathbin{\triangleleft}}
\newcommand{\tripow}[1]{^{\tri\,#1}}

\newcommand{\ocirc}{\circledcirc}

\newcommand{\biglcoc}[2]{
     \begin{bmatrix}{\vphantom{f_f^f}#2} \\ {\vphantom{f_f^f}#1} \end{bmatrix}
}
\newcommand{\littlelcoc}[2]{
     \begin{bsmallmatrix}{\vphantom{f}#2} \\ {\vphantom{f}#1} \end{bsmallmatrix}
}
\newcommand{\lcoc}[2]{
  \relax\if@display
     \biglcoc{#1}{#2}
  \else
     \littlelcoc{#1}{#2}
  \fi
}



\DeclareMathOperator{\Ob}{Ob}
\newcommand{\op}{^\mathsf{op}}
\DeclareMathOperator{\id}{id}
\DeclareMathOperator{\cod}{cod}



\newcommand{\iso}{\cong}



\newcommand{\1}{\mathsf{1}}

\newcommand{\nn}{\mathbb{N}}

\newcommand{\sets}{\Cat{Set}}

\newcommand{\poly}{\Cat{Poly}}

\newcommand{\dial}{\Cat{Dial}}

\newcommand{\hmg}{\Cat{Hmg}}

\newcommand{\yon}{\mathcal{y}} 


\allowdisplaybreaks
\setcounter{tocdepth}{1}

\begin{document}

\title{Monoidal Structures on Generalized Polynomial Categories}

\def\titlerunning{Monoidal Structures on Generalized Polynomial Categories}

\author{
    Joseph Dorta
    \institute{
        Louisiana State University \\
        Baton Rouge, LA, USA
    } 
    \email{jdorta1@lsu.edu}
        \and  
    Samantha Jarvis
    \institute{
        CUNY Graduate Center \\
        New York, NY, USA
    }
    \email{sjarvis@gradcenter.cuny.edu}
        \and
    Nelson Niu
    \institute{
        University of Washington \\
        Seattle, WA, USA
    }
    \email{nsniu@uw.edu}
}

\def\authorrunning{J.\ Dorta, S.\ Jarvis, and N.\ Niu}

\date{\vspace{-.2in}}

\maketitle

\begin{abstract}
Recently, there has been renewed interest in the theory and applications of de~Paiva’s dialectica categories and their relationship to the category of polynomial functors. Both fall under the theory of generalized polynomial categories, which are free coproduct completions of free product completions of (monoidal) categories.
Here we extend known monoidal structures on polynomial functors and dialectica categories to generalized polynomial categories.
We highlight one such monoidal structure, an asymmetric operation generalizing composition of polynomial functors, and show that comonoids with respect to this structure correspond to categories enriched over a related free coproduct completion. Applications include modeling compositional bounds on dynamical systems.
\end{abstract}

\section{Introduction}

Categories whose morphisms (often referred to as \textit{lenses}) model bidirectional data flows are ubiquitous in applied category theory, with applications to such diverse fields as logic \cite{depaiva,dial-logic}, database management \cite{lens-fibr,alg-lens,comonad-cofunc}, game theory \cite{open-game,comp-game,diegetic}, dynamical and distributed systems \cite{spivak2020poly,2cat-dyn-sys,petri,smithe2022open,dyn-cat}, and machine learning \cite{lens-learn,backprop,grad-learn}.
Moss observed that we can obtain a general class of such categories via free product and coproduct completions, universal constructions with convenient concrete characterizations \cite{moss}.
That is, starting from a category $\cat{C}$, we can form a category $\Sig\Pi\cat{C}$ whose objects are formal coproducts of products of objects in $\cat{C}$, or \textit{polynomials} in $\cat{C}$ for short; then the morphisms between these coproducts of products naturally have both a forward component and a backward component in addition to subsuming the original morphisms from $\cat{C}$.
Examples of such generalized polynomial categories include the category $\poly$ of polynomial functors, which may be used to model interaction protocols \cite{polybook}; and a category whose \textit{homogeneous} polynomials span a full subcategory equivalent to de~Paiva's dialectica category on sets, a model for intuitionistic linear logic \cite{depaiva}.
We review the construction of $\Sig\Pi\cat{C}$ and exhibit these examples in \cref{sec.cons}.

The utility of these examples lies not only in their bidirectional morphisms but also in the assorted ways in which such morphisms can be combined via monoidal products.
There are several ways to lift a monoidal structure on $\cat{C}$ to a monoidal structure on $\Sig\Pi\cat{C}$.
We present two such ways in \cref{sec.monoidal}---one classical, given by an iterated Day convolution \cite{Day}; and one we believe is new in the literature, generalizing functor composition in $\poly$.

Many applications of polynomial functors (such as those in \cite{polybook}) depend on a remarkable result by Ahman and Uustalu \cite{poly-comonad,comonad-cofunc}: the category of comonoids in $\poly$ with respect to the composition product is equivalent to the category whose objects are small categories and whose morphisms are \textit{cofunctors}, as introduced by Aguiar \cite{cofunctor}.
Our main result, \cref{thm}, is that Ahman and Uustalu's statement naturally generalizes to $\Sig\Pi\cat{C}$.
By replacing $\poly$ with $\Sig\Pi\cat{C}$ equipped with our generalized composition product, comonoids become small \textit{enriched} categories whose base of enrichment is $\Sig\cat{C}\op$ with Day convolution.
Then morphisms of these comonoids generalize cofunctors to the enriched setting in a way that coincides with Clarke and Di Meglio's recent definition of \textit{enriched cofunctors} \cite{enr-cof}.
We review the necessary definitions before presenting this correspondence in \cref{sec.com} via an explicit construction.

In \cref{sec.app}, we take $\cat{C}$ to be the extended nonnegative reals to demonstrate how morphisms in $\Sig\Pi\cat{C}$ may be used to model dynamical systems with boundedness conditions preserved by the generalized composition product.
Such morphisms can be lifted to enriched cofunctors via a right adjoint to the forgetful functor from comonoids to their underlying objects; we review a few examples before stating the general result as \cref{thm.cofree}.
Finally, we suggest directions for future work in \cref{sec.future}.

\subsection*{Acknowledgments}
The authors are indebted to the mentorship of Valeria de~Paiva at the 2022 AMS MRC and to insight and feedback from our fellow mentees: Charlotte Aten, Colin Bloomfield, Eric Bond, Matteo Capucci, Bruno Gavranovi\'c, J\'er\'emie Koenig, Abdullah Malik, Francisco Rios, Jan Rooduijn, and Jonathan Weinberger.
Additionally, the authors are grateful for the comments provided by the anonymous reviewers.

This material is based upon work supported by the National Science Foundation under Grant Number DMS 1641020.
Any opinions, findings, and conclusions or recommendations expressed in this material are those of the authors and do not necessarily reflect the views of the National Science Foundation.

\section{Free (co)product completions and polynomial categories} \label{sec.cons}

We begin by recalling two constructions on a category $\cat{C}$: the free product completion and its dual, the free coproduct completion.
Here we follow Moss \cite{moss}; we omit proofs for standard results.

\begin{definition}
The \textbf{\textit{free product completion}} 
of a category $\cat{C}$ is the category $\Pi\cat{C}$, where
\begin{itemize}
    \item an object, denoted $\prod_{i\in I}c_i$, consists of
    \begin{itemize}
        \item a set $I$;
        \item for each $i\in I$, an object $c_i$ in $\cat{C}$;
    \end{itemize}
    \item a morphism $\phi\c\prod_{i\in I}c_i\to\prod_{j\in J}d_j$ consists of
    \begin{itemize}
        \item a function $\phi^\sharp\c J\to I$; 
        \item for each $j\in J$, a morphism $\phi_j\c c_{\phi^\sharp j}\to d_j$ in $\cat C$. \qedhere
    \end{itemize}
\end{itemize}
\end{definition}

The category $\cat{C}$ embeds into $\Pi\cat{C}$ as a full subcategory via $c\mapsto\Pi_{\ast\in\1} c$, where $\1\coloneqq\{*\}$ is the singleton set.
As implied by the name ``free product completion,'' the category $\Pi\cat{C}$ equipped with the embedding $\cat{C}\inj\Pi\cat{C}$ is universal among categories $\cat{D}$ with small products equipped with functors $\cat{C}\to\cat{D}$.

We may alternatively characterize $\Pi\cat{C}$ as follows, using the fact that $[\cat{C},\sets]\op$ equipped with the Yoneda embedding $\cat{C}\inj[\cat{C},\sets]\op$ is the free limit completion of $\cat{C}$ and restricting to products.

\begin{proposition} \label{prop.free-prod-presheaf}
The category $\Pi\cat C$ is equivalent to the full subcategory of $[\cat{C},\sets]\op$ spanned by products of representable functors.
\end{proposition}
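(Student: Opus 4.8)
The plan is to exhibit an explicit functor $\Phi\c\Pi\cat{C}\to[\cat{C},\sets]\op$, show it is fully faithful, and identify its essential image with the stated subcategory---where ``product'' is read in $[\cat{C},\sets]\op$ and hence computed as a coproduct in $[\cat{C},\sets]$. On objects I would set
\[
\Phi\Bigl(\prod_{i\in I}c_i\Bigr)\coloneqq\coprod_{i\in I}\cat{C}(c_i,-),
\]
the coproduct in $[\cat{C},\sets]$ of the representable functors $\cat{C}(c_i,-)$, regarded as an object of the opposite category. Since every representable is $\Phi$ of a one-element product and every coproduct of representables arises on the nose as some $\Phi(\prod_{i}c_i)$, the functor is tautologically surjective onto the objects of the stated subcategory; it then remains to define $\Phi$ on morphisms and to check full faithfulness.

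For the hom-set computation, fix $\prod_{i\in I}c_i$ and $\prod_{j\in J}d_j$. A morphism between their images in $[\cat{C},\sets]\op$ is by definition a natural transformation
\[
\coprod_{j\in J}\cat{C}(d_j,-)\To{}\coprod_{i\in I}\cat{C}(c_i,-)
\]
in $[\cat{C},\sets]$. The key steps are two standard manipulations. First, the universal property of the coproduct in the domain reduces such a transformation to a $J$-indexed family of natural transformations $\cat{C}(d_j,-)\to\coprod_{i\in I}\cat{C}(c_i,-)$. Second, since coproducts of functors are computed pointwise, the covariant Yoneda lemma identifies each of these with an element of $\bigl(\coprod_{i\in I}\cat{C}(c_i,-)\bigr)(d_j)=\coprod_{i\in I}\cat{C}(c_i,d_j)$. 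Unwinding, such an element is exactly a choice of index $\phi^\sharp j\in I$ together with a morphism $\phi_j\c c_{\phi^\sharp j}\to d_j$ in $\cat{C}$. Assembling over all $j\in J$ recovers precisely the data of a morphism $\prod_{i\in I}c_i\to\prod_{j\in J}d_j$ in $\Pi\cat{C}$, so the correspondence is a bijection on hom-sets; reading it backwards defines $\Phi$ on morphisms.

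It then remains to confirm that this bijection is functorial, preserving identities and composition; I expect this to be the most delicate bookkeeping, though not conceptually hard. One must check that composing two morphisms in $[\cat{C},\sets]\op$---equivalently, composing the underlying natural transformations in $[\cat{C},\sets]$ in the reverse order---corresponds under the Yoneda identification to the composite in $\Pi\cat{C}$, whose forward part is the contravariant composite $\phi^\sharp\psi^\sharp$ of index functions and whose backward part at $k$ is $\psi_k\circ\phi_{\psi^\sharp k}$. The reversal of variance introduced by the opposite category is precisely what matches the contravariant reindexing in $\Pi\cat{C}$, so keeping careful track of the two occurrences of ${}\op$ is the main thing to get right. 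Once functoriality is in hand, the hom-set bijection upgrades to the asserted equivalence of categories.
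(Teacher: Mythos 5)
Your proof is correct, but it takes a genuinely different route from the paper's. The paper disposes of the statement in one line of abstract theory: the embedding $c\mapsto\cat{C}(c,-)$ exhibits $[\cat{C},\sets]\op$ as the free limit completion of $\cat{C}$, and the free product completion is then, by general principles about free completions, the full subcategory obtained by closing the representables under products---no hom-set is ever computed. You instead construct the equivalence by hand: the object assignment $\prod_{i\in I}c_i\mapsto\coprod_{i\in I}\cat{C}(c_i,-)$, the reduction of a natural transformation out of a coproduct to a $J$-indexed family via the coproduct's universal property, and the covariant Yoneda lemma identifying each component with an element of $\coprod_{i\in I}\cat{C}(c_i,d_j)$, i.e.\ a pair consisting of an index $\phi^\sharp j\in I$ and a morphism $\phi_j\c c_{\phi^\sharp j}\to d_j$. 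This computation is right; your reading of ``products of representables'' in $[\cat{C},\sets]\op$ as coproducts in $[\cat{C},\sets]$ is the intended one (though for essential surjectivity ``on the nose'' should really be ``up to isomorphism,'' which is all an equivalence needs); and the composition rule you quote for $\Pi\cat{C}$---index functions composing contravariantly, backward part $\psi_k\circ\phi_{\psi^\sharp k}$---is the correct one, so the functoriality check you defer is genuinely routine. As for what each approach buys: the paper's argument is shorter, dualizes instantly to \cref{prop.free-coprod-presheaf}, but presupposes the theory of free (co)limit completions; yours is elementary and self-contained, and it makes visible exactly where the bidirectional, lens-like shape of $\Pi\cat{C}$-morphisms comes from, namely the pointwise computation of coproducts in $\sets$ inside the Yoneda correspondence.
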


\begin{definition}
The \textbf{\textit{free coproduct completion}} 
of a category $\cat C$ is the category $\Sig\cat C$, where
\begin{itemize}
    \item an object, denoted $\sum_{i\in I}c_i$, consists of
    \begin{itemize}
        \item a set $I$;
        \item for each $i\in I$, an object $c_i$ in $\cat{C}$;
    \end{itemize}
    \item a morphism $\phi\c\sum_{i\in I}c_i\to\sum_{j\in J}d_j$ consists of
    \begin{itemize}
        \item a function $\phi\c I\to J$;
        \item for each $i\in I$, a morphism $\phi_i\c c_i\to d_{\phi i}$ in $\cat C$. \qedhere
    \end{itemize}
\end{itemize}

\end{definition}

There is a fully faithful functor $\cat C\inj\Sig\cat C$ sending $c\mapsto\sum_{\ast\in\1}c$.
Comparing the definitions, we find that $(\Sig\cat{C}\op)\op\approx\Pi\cat{C}$; in particular, dualizing \cref{prop.free-prod-presheaf} yields the following.

\begin{proposition}\label{prop.free-coprod-presheaf} The category $\Sig\cat C$ is equivalent to the full subcategory of $[\cat C\op,\sets]$ spanned by coproducts of representable functors.
\end{proposition}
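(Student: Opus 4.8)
The plan is to obtain this as the formal dual of \cref{prop.free-prod-presheaf}, exploiting the identity $(\Sig\cat{C}\op)\op\approx\Pi\cat{C}$ noted just above. Replacing $\cat{C}$ with $\cat{C}\op$ in that identity gives $(\Sig\cat{C})\op\approx\Pi(\cat{C}\op)$, so it suffices to describe $\Pi(\cat{C}\op)$ via \cref{prop.free-prod-presheaf} and then take opposite categories throughout.

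Concretely, I would first apply \cref{prop.free-prod-presheaf} verbatim to the category $\cat{C}\op$: this presents $\Pi(\cat{C}\op)$ as the full subcategory of $[\cat{C}\op,\sets]\op$ spanned by products of representables. Next I would take opposite categories on both sides of this equivalence. On the left, $(\Pi(\cat{C}\op))\op\approx\Sig\cat{C}$ by the displayed duality. On the right, $([\cat{C}\op,\sets]\op)\op=[\cat{C}\op,\sets]$, and the op-functor carries the full subcategory on products of representables onto the full subcategory on coproducts of the same representables, since passing to opposites turns products into coproducts while fixing the spanning representables up to the canonical identification. Assembling these gives the claim.

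Alternatively---and perhaps more transparently---I would dualize the argument itself rather than the statement: the Yoneda embedding $\cat{C}\inj[\cat{C}\op,\sets]$, $c\mapsto\cat{C}(-,c)$, exhibits $[\cat{C}\op,\sets]$ as the free colimit completion of $\cat{C}$, and since a free coproduct completion is nothing but the full subcategory of the free cocompletion spanned by coproducts of representables, the assignment $\sum_{i\in I}c_i\mapsto\coprod_{i\in I}\cat{C}(-,c_i)$ is the desired equivalence.

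The only genuine content to verify---and the step I would treat most carefully---is that this assignment is fully faithful, i.e.\ that the hom-sets agree. Unwinding the definition of $\Sig\cat{C}$ gives $\Sig\cat{C}(\sum_i c_i,\sum_j d_j)=\prod_i\coprod_j\cat{C}(c_i,d_j)$; on the presheaf side, using that a coproduct in the domain becomes a product of hom-sets and that coproducts of presheaves are computed pointwise, the Yoneda lemma yields $[\cat{C}\op,\sets](\coprod_i\cat{C}(-,c_i),\coprod_j\cat{C}(-,d_j))=\prod_i\bigl(\coprod_j\cat{C}(-,d_j)\bigr)(c_i)=\prod_i\coprod_j\cat{C}(c_i,d_j)$. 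Matching these two expressions is routine, so the real obstacle is purely bookkeeping: tracking the opposites so that ``products of representables in $[\cat{C}\op,\sets]\op$'' becomes exactly ``coproducts of representables in $[\cat{C}\op,\sets]$.''
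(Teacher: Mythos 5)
Your proposal is correct and follows the paper's own route: the paper likewise obtains this proposition by dualizing \cref{prop.free-prod-presheaf} via the identity $(\Sig\cat{C}\op)\op\approx\Pi\cat{C}$, leaving the bookkeeping implicit. Your spelled-out tracking of the opposites, the alternative direct Yoneda argument, and the hom-set computation $\prod_i\coprod_j\cat{C}(c_i,d_j)$ are all accurate elaborations of what the paper compresses into one line.
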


The category $\Sig\cat{C}$ equipped with the embedding $\cat{C}\inj\Sig\cat{C}$ is universal among categories $\cat{D}$ with small coproducts equipped with functors $\cat{C}\to\cat{D}$; in particular $\Sig\cat{C}$ has small coproducts.
As for products in $\Sig\cat{C}$, we have the following proposition.

\begin{proposition} \label{prop.free-sum-gets-prods}
If $\cat C$ has all small products, then $\Sig\cat C$ has all small products given by a distributive law
\begin{equation} \label{eqn.distrib}
    \prod_{i\in I}\sum_{j\in J_i}c_{i,j}\iso\sum_{\ol j\in\prod\limits_{i\in I}J_i}\prod_{i\in I}c_{i,\ol j_i}.
\end{equation}
\end{proposition}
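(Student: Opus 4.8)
The plan is to transport the problem across the equivalence of \cref{prop.free-coprod-presheaf} into the presheaf category $[\cat C\op,\sets]$, where products are computed pointwise and are therefore transparent. Two facts drive the argument. First, since $\cat C$ has all small products, the Yoneda embedding $\cat C\inj[\cat C\op,\sets]$ preserves them, so $\cat C(-,\prod_i d_i)\iso\prod_i\cat C(-,d_i)$; in other words, products of representables are again representable. Second, in $\sets$ products distribute over coproducts: for any family of sets there is a natural bijection $\prod_{i\in I}\coprod_{j\in J_i}X_{i,j}\iso\coprod_{\ol j\in\prod_{i\in I}J_i}\prod_{i\in I}X_{i,\ol j_i}$, sending a tuple whose $i$-th entry lies in the summand indexed by $\ol j_i$ to the pair consisting of the index tuple $\ol j$ and the resulting entries.

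Concretely, under the equivalence an object $\sum_{j\in J_i}c_{i,j}$ corresponds to the presheaf $\coprod_{j\in J_i}\cat C(-,c_{i,j})$. I would compute the product of the family indexed by $i\in I$ pointwise: evaluated at an object $x$ of $\cat C$, it is $\prod_{i\in I}\coprod_{j\in J_i}\cat C(x,c_{i,j})$. Applying the set-level distributive law and then representability gives
\[ \prod_{i\in I}\coprod_{j\in J_i}\cat C(x,c_{i,j})\iso\coprod_{\ol j\in\prod_i J_i}\prod_{i\in I}\cat C(x,c_{i,\ol j_i})\iso\coprod_{\ol j\in\prod_i J_i}\cat C\Bigl(x,\prod_{i\in I}c_{i,\ol j_i}\Bigr), \]
naturally in $x$. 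Thus the pointwise product is the presheaf $\coprod_{\ol j}\cat C(-,\prod_i c_{i,\ol j_i})$, which is exactly the image of $\sum_{\ol j\in\prod_i J_i}\prod_{i\in I}c_{i,\ol j_i}$ under the equivalence.

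To finish, I would observe that this pointwise product, being a coproduct of representables, lies in the full subcategory spanned by coproducts of representables; and a limit in $[\cat C\op,\sets]$ whose factors and apex all lie in a full subcategory is automatically a limit there, so it is the product of the given family within that subcategory. Transporting back along the equivalence of \cref{prop.free-coprod-presheaf} shows that this product exists in $\Sig\cat C$ and is given by the asserted formula; since the family was arbitrary, $\Sig\cat C$ has all small products.

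The only real content---and the step I would be most careful about---is the \emph{closure} of the subcategory under products: this is precisely where both hypotheses are used, since representables are closed under products only because $\cat C$ has products, and rewriting the pointwise product of coproducts as a coproduct of products is exactly the set-theoretic distributive law. The remaining task is the routine check that the distributive bijection is natural in $x$, so that it assembles into an isomorphism of presheaves. Alternatively, one could verify the formula directly by exhibiting the projections $\sum_{\ol j}\prod_i c_{i,\ol j_i}\to\sum_{j\in J_i}c_{i,j}$---given on indices by $\ol j\mapsto\ol j_i$ and on components by the product projections in $\cat C$---and checking the universal property by hand, but this amounts to the same bookkeeping with more notational overhead.
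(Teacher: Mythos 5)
Your proof is correct and takes essentially the same approach as the paper's: transport the problem across the equivalence of \cref{prop.free-coprod-presheaf}, compute the product pointwise in $[\cat C\op,\sets]$, and apply the set-level distributive law. The paper's proof is just a terser version of yours, leaving implicit the two points you carefully spell out (that representables are closed under products because the Yoneda embedding preserves them, and that the pointwise product therefore lies in, and is a product in, the full subcategory).
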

\begin{proof}
\cref{eqn.distrib} holds in $\sets$, so since (co)products are computed pointwise in $[\cat C\op,\sets]$, it holds there as well.
When every $c_{i,j}$ is representable, the right hand side is a coproduct of representables, as products of representables are themselves representable.
Hence \cref{eqn.distrib} also holds in the full subcategory of $[\cat C\op,\sets]$ spanned by coproducts of representables.
Then the conclusion follows from \cref{prop.free-coprod-presheaf}.
\end{proof}

If we freely add products, then freely add coproducts, we obtain the central construction of this paper.

\begin{definition}
The category $\Sig\Pi\cat{C}$ of \textbf{\textit{polynomials in $\cat{C}$}} is the category where
\begin{itemize}
    \item an object, denoted $\sum_{i\in I}\prod_{a\in A_i}c_{i,a}$, consists of
    \begin{itemize}
        \item a set $I$ of \textbf{positions};
        \item for each $i\in I$, a set $A_i$ of \textbf{directions} at $i$;
        \item a doubly-indexed family $(c_{i,a})_{i \in I, a \in A}$ of objects of $\cat{C}$, called \textbf{predicates};
    \end{itemize}
    \item a morphism $\phi\c\sum_{i\in I}\prod_{a\in A_i}c_{i,a}\to\sum_{j\in J}\prod_{b\in B_j}d_{j,b}$ consists of
    \begin{itemize}
        \item an \textbf{on-positions function} $\phi\c I\to J$;
        \item for each $i\in I$, an \textbf{on-directions function} $\phi^\sharp_i\c B_{\phi i}\to A_i$;
        \item for each $i\in I$ and $b\in B_{\phi i}$, an \textbf{on-predicates map} $\phi_{i,b}\c c_{i,\phi^\sharp_ib}\to d_{\phi i,b}$. \qedhere
    \end{itemize}
\end{itemize}
\end{definition}

Unraveling the definitions, we see that $\Sig\Pi\cat{C}$ is indeed the free coproduct completion of the free product completion of $\cat{C}$.
The following characterization of the hom-sets of $\Sig\Pi\cat{C}$ is immediate.

\begin{proposition}
The hom-sets of $\Sig\Pi\cat{C}$ are given by
\[
    \Sig\Pi\cat{C}\!\Big(\sum_{i\in I}\prod_{a\in A_i}c_{i,a},\sum_{j\in J}\prod_{b\in B_j}d_{j,b}\Big)\iso\prod_{i\in I}\sum_{j\in J}\prod_{b\in B_j}\sum_{a\in A_i}\cat{C}(c_{i,a},d_{j,b}).
\]
\end{proposition}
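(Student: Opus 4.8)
The plan is to establish the isomorphism by directly unwinding the set-theoretic meaning of the iterated sums and products on the right-hand side, interpreting each $\sum_{x\in X}S_x$ as the set of dependent pairs $(x,s)$ with $s\in S_x$ and each $\prod_{x\in X}S_x$ as the set of dependent functions assigning to each $x\in X$ an element of $S_x$, and then matching the resulting data against the three components of a morphism in the definition of $\Sig\Pi\cat{C}$.

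First I would peel off the outermost operator. An element of $\prod_{i\in I}\bigl(\cdots\bigr)$ is a family indexed by $i\in I$, which will produce the per-position data of a morphism. For fixed $i$, an element of $\sum_{j\in J}\prod_{b\in B_j}\sum_{a\in A_i}\cat{C}(c_{i,a},d_{j,b})$ is a pair consisting of a position $j\in J$ together with an element of $\prod_{b\in B_j}\sum_{a\in A_i}\cat{C}(c_{i,a},d_{j,b})$; the assignment $i\mapsto j$ is precisely the on-positions function $\phi\c I\to J$, so that $j=\phi i$ and $B_j=B_{\phi i}$.

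Next, continuing inward with $j=\phi i$ fixed, an element of $\prod_{b\in B_{\phi i}}\sum_{a\in A_i}\cat{C}(c_{i,a},d_{\phi i,b})$ assigns to each direction $b\in B_{\phi i}$ a pair $(a,g)$ where $a\in A_i$ and $g\in\cat{C}(c_{i,a},d_{\phi i,b})$. The assignment $b\mapsto a$ is the on-directions function $\phi^\sharp_i\c B_{\phi i}\to A_i$, and the component $g$, now of type $\cat{C}(c_{i,\phi^\sharp_i b},d_{\phi i,b})$, is exactly the on-predicates map $\phi_{i,b}$. Reading this correspondence in both directions exhibits a bijection, which is the claimed isomorphism.

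The one point requiring care---though it is bookkeeping rather than a genuine obstacle---is that the nesting of the dependent sum and product on the right-hand side precisely encodes the dependency structure of the morphism data: the choice of $j$ must be made before quantifying over $b\in B_j$, and the choice of $a\in A_i$ must be made after fixing $b$, so that the domain $B_{\phi i}$ of $\phi^\sharp_i$ and the (co)domains $c_{i,\phi^\sharp_i b}$ and $d_{\phi i,b}$ of $\phi_{i,b}$ all land where intended. Verifying that these dependencies align at each stage is exactly what makes the bijection well-defined, and once that is checked no further argument is needed.
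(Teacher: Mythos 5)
Your proof is correct and matches the paper's approach: the paper simply notes that this characterization ``follows directly from its definition,'' and your unwinding of the dependent sums and products into the on-positions function, on-directions functions, and on-predicates maps is exactly that direct verification. The care you take with the dependency ordering (choosing $j$ before quantifying over $b\in B_j$, and $a\in A_i$ after fixing $b$) is the only substantive point, and you handle it correctly.
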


Even though we added products before we added coproducts, \cref{prop.free-sum-gets-prods} ensures that $\Sig\Pi\cat C$ has small products in addition to having small coproducts and that these products distribute over coproducts.

The construction of $\Sig\Pi\cat{C}$ generalizes two particularly versatile categories: the category of \textit{polynomial functors} and one of de~Paiva's \textit{dialectica categories} \cite{depaiva}.
In the remainder of this section, we review each of these categories in turn, observing how they arise from categories of polynomials.

\subsection*{The category of polynomial functors}

We consider $\Sig\Pi\cat{C}$ a \textit{generalized polynomial category} because it generalizes the category $\poly$ of polynomial functors, which we recall below.

\begin{definition}
A \textbf{\textit{polynomial functor}} $p\colon\sets\to\sets$ is a coproduct of representable functors.
That is, there exist $I\in\sets$ and $p[i]\in\sets$ for each $i\in I$ such that, for $\yon^{p[i]}\coloneqq\sets(p[i],-)$,
\[
    p\iso\sum_{i\in I}\yon^{p[i]}.
\]
We call the elements of $p(\1)\iso I$ the \textbf{positions} of $p$ and the elements of $p[i]$ the \textbf{directions} of $p$ at $i$.%
\footnote{The ``positions'' and ``directions'' terminology for polynomial functors was introduced by Spivak \cite{spivak2020poly}.}
We denote the category of polynomial functors and the natural transformations between them by $\poly$.
\end{definition}

It turns out that $\poly$ is the category of polynomials in the terminal category $\cat{1}$, consisting of one object and no non-identity morphisms.

\begin{proposition}
$\poly\approx\Sig\Pi\cat1$.
\end{proposition}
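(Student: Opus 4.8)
The plan is to exhibit an explicit functor $F\c\Sig\Pi\cat1\to\poly$ and verify that it is an equivalence; the point is that $\cat1$ has only one object and one morphism, so the predicates and on-predicate maps in $\Sig\Pi\cat1$ carry no information, and the remaining data is exactly that of a polynomial functor together with a natural transformation. On objects, every predicate $c_{i,a}$ must equal the unique object of $\cat1$, so an object of $\Sig\Pi\cat1$ amounts to a set $I$ together with a family of sets $(A_i)_{i\in I}$; I would send it to the polynomial functor $\sum_{i\in I}\yon^{A_i}$. Since every polynomial functor is by definition isomorphic to one of this form, $F$ is essentially surjective on objects.

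For morphisms, I would read off the hom-sets on both sides and check that they agree. Instantiating the hom-set formula above at $\cat C=\cat1$ and using $\cat1(\ast,\ast)\iso\1$ collapses the inner factors:
\[
    \Sig\Pi\cat1\left(\sum_{i\in I}\prod_{a\in A_i}\ast,\ \sum_{j\in J}\prod_{b\in B_j}\ast\right)\iso\prod_{i\in I}\sum_{j\in J}\prod_{b\in B_j}\sum_{a\in A_i}\1\iso\prod_{i\in I}\sum_{j\in J}\sets(B_j,A_i).
\]
On the other side, writing $p=\sum_i\yon^{A_i}$ and $q=\sum_j\yon^{B_j}$, I would compute $\poly(p,q)$ by turning the coproduct in the source into a product of hom-sets and applying Yoneda to each representable, obtaining $\poly(p,q)\iso\prod_i q(A_i)\iso\prod_{i\in I}\sum_{j\in J}\sets(B_j,A_i)$, the same set. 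The content of fully faithfulness is that this common description is the one induced by $F$: a morphism of $\Sig\Pi\cat1$ is precisely an on-positions function $\phi\c I\to J$ together with on-directions functions $\phi^\sharp_i\c B_{\phi i}\to A_i$, and $F$ sends it to the natural transformation carrying exactly this data.

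The step that actually needs checking---and the only place where the categorical structure, rather than the mere counting of morphisms, enters---is functoriality: that $F$ preserves identities and composition. Here I would observe that composition in $\Sig\Pi\cat1$ composes on-positions functions covariantly and on-directions functions contravariantly (the on-predicate components being forced), and that composition of natural transformations between polynomials does the same; so the two composition rules coincide under $F$ and the hom-set bijection is natural. Combined with essential surjectivity, this yields the equivalence $\poly\approx\Sig\Pi\cat1$. As a more conceptual alternative, I could instead chain the established propositions: \cref{prop.free-prod-presheaf} identifies $\Pi\cat1$ with the products of representables in $[\cat1,\sets]\op\approx\sets\op$, namely all sets, so $\Pi\cat1\approx\sets\op$; then \cref{prop.free-coprod-presheaf} identifies $\Sig(\sets\op)$ with the coproducts of representables in $[\sets,\sets]$, which are exactly the polynomial functors, giving $\Sig\Pi\cat1\approx\Sig(\sets\op)\approx\poly$.
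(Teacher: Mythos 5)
Your proposal is correct, but your primary argument takes a genuinely different route from the paper. The paper's proof is two lines: it observes that $\Pi\cat1\approx\sets\op$ directly from the definition of the free product completion (objects of $\Pi\cat1$ are just sets, morphisms are reversed functions, since the predicate data over $\cat1$ is trivial), and then applies \cref{prop.free-coprod-presheaf} to conclude that $\Sig\Pi\cat1\approx\Sig(\sets\op)$ is the full subcategory of $[\sets,\sets]$ spanned by coproducts of representables, which is $\poly$ by definition. Your main argument instead builds an explicit functor $F\c\Sig\Pi\cat1\to\poly$, checks essential surjectivity, matches the hom-sets $\prod_{i\in I}\sum_{j\in J}\sets(B_j,A_i)$ on both sides, and identifies functoriality (compatibility of the two composition rules) as the remaining content---all of which is sound, and has the side benefit of making the positions-and-directions description of $\poly$-morphisms explicit, which the paper only extracts afterward in its subsequent example. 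What the paper's route buys is economy: by working through the presheaf characterizations, the fully-faithfulness and functoriality checks you flag as ``the step that actually needs checking'' are inherited from the Yoneda embedding rather than verified by hand. Your closing ``conceptual alternative'' is essentially the paper's own proof, except that you derive $\Pi\cat1\approx\sets\op$ via \cref{prop.free-prod-presheaf} (products of representables in $[\cat1,\sets]\op$) where the paper gets it directly from the definition; both are fine.
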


\begin{proof}
By definition, $\Pi\cat1\approx\sets\op$.
Then \cref{prop.free-coprod-presheaf} implies that $\Sig\Pi\cat1$ is the full subcategory of $[\sets,\sets]$ spanned by coproducts of representables.
\end{proof}

Viewing $\poly$ as $\Sig\Pi\cat{1}$, we can characterize the morphisms of $\poly$ as follows.

\begin{example}
A morphism $\phi\c p\to q$ in $\poly\approx\Sig\Pi\cat1$ consists of
\begin{itemize}
    \item an \textbf{on-positions function} $\phi_\1\c p(\1)\to q(\1)$;%
    \footnote{We use a subscript $\1$ for the on-positions function as it is the $\1$-component of $\phi$ as a natural transformation \cite{polybook}.}
    \item for each $i\in p(\1)$, an \textbf{\textit{on-directions function}} $\phi^\sharp_i\c q[\phi i]\to p[i]$. \qedhere
\end{itemize}
\end{example}

\subsection*{The dialectica category on sets}

Rather than working with the entire category of polynomials in $\cat{C}$, it is sometimes easier to work with one of its full subcategories, which we define below.

\begin{definition}
A polynomial in $\cat{C}$ is \textbf{\textit{homogeneous}}%
\footnote{The terminology comes from algebra, where a \textit{homogeneous} polynomial is one whose summands all have the same degree.}
if it can be written in the form
\[
    \sum_{i \in I} \prod_{a \in A} u_{i,a},
\]
where the set $A$ does not depend on $i\in I$.
We let $\hmg(\cat{C})$ denote the full subcategory of $\Sig\Pi\cat{C}$ spanned by homogeneous polynomials.
\end{definition}

As an example, let $\cat2$ denote the walking arrow category, which has two objects $\bot$ and $\top$ and one non-identity arrow $\bot\to\top$.

\begin{example}
In the category $\hmg(\cat2)$,
\begin{itemize}
    \item an object, denoted $\sum_{i\in I}\prod_{a\in A}c_{i,a}$, consists of
    \begin{itemize}
        \item two sets, $I$ and $A$;
        \item for each $(i,a)\in I\times A$, an object $c_{i,a}\in\{\bot,\top\}$;
    \end{itemize}
    \item a morphism $\phi\c\sum_{i\in I}\prod_{a\in A}c_{i,a}\to\sum_{j\in J}\prod_{b\in B}d_{j,b}$ consists of
    \begin{itemize}
        \item a function $\phi\c I\to J$;
        \item a function $\phi^\sharp\c I\times B\to A$; such that
        \item for each $i\in I$ and $b\in B$, if $c_{i,\phi^\sharp(i,b)}=\top$, then $d_{\phi i,b}=\top$. \qedhere
    \end{itemize}
\end{itemize}
\end{example}

This is precisely de~Paiva's original dialectica category on $\sets$ \cite{depaiva}.

\begin{proposition}
    $\hmg(\cat{2}) \approx \dial(\sets)$.
\end{proposition}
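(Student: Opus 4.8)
The plan is to check that the explicit description of $\hmg(\cat{2})$ recorded in the preceding example agrees, term for term, with de Paiva's definition of $\dial(\sets)$, and to package this agreement as a functor that is bijective on objects and on hom-sets. Recall that an object of $\dial(\sets)$ is a triple $(U, X, \alpha)$ consisting of two sets $U, X$ together with a relation $\alpha \ss U \times X$, and that a morphism $(U, X, \alpha) \to (V, Y, \beta)$ is a pair $(f, F)$ of functions $f \c U \to V$ and $F \c U \times Y \to X$ such that, for all $u \in U$ and $y \in Y$, $(u, F(u,y)) \in \alpha$ implies $(f u, y) \in \beta$. I want to produce exactly this data from an object and a morphism of $\hmg(\cat{2})$.

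First I would fix the dictionary translating predicates into relations. Since $\cat{2}$ has object set $\{\bot, \top\}$, a family $(c_{i,a})_{(i,a) \in I \times A}$ of objects of $\cat{2}$ is the same as a function $I \times A \to \{\bot, \top\}$, equivalently the relation $\alpha \coloneqq \{(i,a) : c_{i,a} = \top\} \ss I \times A$. This sends an object $\sum_{i\in I}\prod_{a\in A} c_{i,a}$ of $\hmg(\cat{2})$ to the object $(I, A, \alpha)$ of $\dial(\sets)$, visibly bijectively. On morphisms, the on-positions function $\phi \c I \to J$ becomes $f$ and the on-directions function $\phi^\sharp \c I \times B \to A$ becomes $F$; the defining condition of a morphism in $\hmg(\cat{2})$---that $c_{i, \phi^\sharp(i,b)} = \top$ implies $d_{\phi i, b} = \top$---is then literally the dialectica condition $(i, \phi^\sharp(i,b)) \in \alpha \Rightarrow (\phi i, b) \in \beta$. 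Here it is worth noting that this condition is precisely what records the existence of the (necessarily unique) on-predicates map $c_{i, \phi^\sharp_i b} \to d_{\phi i, b}$ in $\cat{2}$, since $\cat{2}$ admits a morphism $c \to d$ exactly when $c = \bot$ or $d = \top$. Thus the correspondence is a bijection on hom-sets as well.

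It then remains to verify functoriality, after which the correspondence is an isomorphism of categories and in particular yields the asserted equivalence. Preservation of identities is immediate. The one computation I expect to require care---and the only place where any friction arises---is matching composition in $\hmg(\cat{2})$, inherited from $\Sig\Pi\cat{2}$, with de Paiva's composite. In $\Sig\Pi\cat{C}$ the on-directions of a composite $\psi \circ \phi$ at position $i$ is $\phi^\sharp_i \circ \psi^\sharp_{\phi i}$; in the homogeneous setting over $\cat{2}$ this sends $e$ to $\phi^\sharp(i, \psi^\sharp(\phi i, e))$, which is exactly the dialectica composite $H(u,z) = F(u, G(f u, z))$ under the dictionary $f = \phi$, $F = \phi^\sharp$, $G = \psi^\sharp$. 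Because the on-predicates maps in $\cat{2}$ are unique whenever they exist, no further bookkeeping about them is needed, and the match is complete.
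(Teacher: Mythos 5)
Your proposal is correct and matches the paper's approach: the paper offers no separate argument beyond the preceding example's explicit description of $\hmg(\cat{2})$ together with the observation that it is ``precisely'' de Paiva's definition, and your proof simply fills in that identification (predicates as relations, on-predicates maps in the poset $\cat{2}$ reducing to the implication condition, and composition matching de Paiva's composite). The only real content you add is the explicit check of functoriality, which the paper leaves implicit.
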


\section{Monoidal structures on polynomial categories} \label{sec.monoidal}

Most of the applications of $\poly$ and $\dial(\sets)$ rely on their monoidal structures; in this section, we will generalize such structures to $\Sig\Pi\cat{C}$.
Throughout, let $(\cat{C},e,\cdot)$ be a monoidal category with unit $e\in\cat{C}$ and product $\cdot\c\cat{C}\times\cat{C}\to\cat{C}$.
The monoidal structure on $\cat{C}$ then induces monoidal structures on $\Sig\Pi\cat{C}$.

\subsection{The parallel product}

A monoidal product $\cdot$ on $\cat{C}$ always induces a monoidal product $\odot$ on the free colimit completion $[\cat{C}\op,\sets]$ of $\cat{C}$: the Day convolution \cite{Day}, which agrees with $\cdot$ on the full subcategory $\cat{C}\inj[\cat{C}\op,\sets]$.

\begin{proposition}\label{parcon1}
The Day convolution $\odot$ on $[\cat{C}\op,\sets]$ restricts to a monoidal product on the free coproduct completion $\Sig\cat{C}$ of $\cat{C}$, yielding a distributive monoidal category $(\Sig\cat{C},e,\odot)$.
\end{proposition}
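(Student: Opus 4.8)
The plan is to exploit the universal property of Day convolution as a left Kan extension, which makes $\odot$ cocontinuous separately in each variable and makes the Yoneda embedding strong monoidal. Concretely, I would first recall the two standard facts about $\odot$ on $[\cat{C}\op,\sets]$: that $\yon_{c}\odot\yon_{d}\iso\yon_{c\cdot d}$ for representables (a consequence of the co-Yoneda lemma), with monoidal unit $\yon_e$; and that $\odot$ preserves all colimits—in particular coproducts—in each argument, since $F\odot(-)$ and $(-)\odot G$ are each left adjoints.

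Next I would invoke \cref{prop.free-coprod-presheaf} to identify the objects of $\Sig\cat{C}$ with coproducts of representables $\sum_{i\in I}\yon_{c_i}$ inside $[\cat{C}\op,\sets]$, noting that coproducts in $\Sig\cat{C}$ coincide with those computed pointwise in the presheaf category. Applying cocontinuity in each variable together with the strong monoidality of $\yon$ then gives
\[
\left(\sum_{i\in I}\yon_{c_i}\right)\odot\left(\sum_{j\in J}\yon_{d_j}\right)\iso\sum_{(i,j)\in I\times J}\yon_{c_i\cdot d_j},
\]
which is again a coproduct of representables. Since the unit $\yon_e$ is a one-term coproduct of representables corresponding to $e\in\cat{C}\inj\Sig\cat{C}$, the full subcategory $\Sig\cat{C}$ is closed under $\odot$ and contains $e$. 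Because $\Sig\cat{C}\inj[\cat{C}\op,\sets]$ is full, the associators and unitors of Day convolution restrict to $\Sig\cat{C}$ and the pentagon and triangle axioms hold automatically there, yielding the monoidal category $(\Sig\cat{C},e,\odot)$.

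Finally, distributivity is immediate from the same displayed formula—or directly from the cocontinuity of $\odot$—since it exhibits $\odot$ as distributing over coproducts in each variable, so that the canonical comparison maps $\sum_i(a\odot b_i)\to a\odot\left(\sum_i b_i\right)$ are isomorphisms. This is exactly the data of a distributive monoidal category.

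I expect the only point requiring genuine care to be the closure property: verifying that the Day convolution of two coproducts of representables lands again among coproducts of representables. Everything else—coherence and distributivity—follows formally once closure and cocontinuity are in hand. The useful leverage here is that $\cat{C}$ itself need not possess any coproducts: the outer sum $\sum_{(i,j)}$ is formed in $\Sig\cat{C}$, while each factor $c_i\cdot d_j$ is computed using only the given monoidal product of $\cat{C}$.
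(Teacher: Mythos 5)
Your proposal is correct and follows essentially the same route as the paper's own proof: both arguments rest on the facts that Day convolution preserves coproducts in each variable (the paper cites its coend description, you cite the left-adjoint property) and that it restricts to $\cdot$ on representables, yielding the same closure formula $\left(\sum_{i\in I}\yon_{c_i}\right)\odot\left(\sum_{j\in J}\yon_{d_j}\right)\iso\sum_{(i,j)\in I\times J}\yon_{c_i\cdot d_j}$ and hence distributivity. The extra detail you supply (strong monoidality of the Yoneda embedding, fullness giving the coherence data for free) is exactly what the paper leaves implicit.
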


\begin{proof}
The Day convolution is a coend construction and thus preserves coproducts.
Hence $\Sig\cat{C}$ is closed under $\odot$, and $\odot$ distributes over coproducts:
\begin{equation} \label{eq.par}
    \Big(\sum_{i\in I}c_i\Big)\odot\Big(\sum_{j\in J}d_j\Big)\iso\sum_{i\in I}\sum_{j\in J}(c_i\odot d_j)\iso\sum_{(i,j)\in I\times J}(c_i\cdot d_j). \qedhere
\end{equation}
\end{proof}

\cref{eq.par} tells us how to evaluate $\odot$ on arbitrary objects in $\Sig\cat{C}$.
We dualize this construction to obtain an analogous monoidal product on $\Pi\cat{C}\approx(\Sig\cat{C}\op)\op$.

\begin{proposition}\label{parcon2}
There is a monoidal structure on $\Pi \cat{C}$ with unit $e$ whose monoidal product $\ocirc$ is given by 
\[
    \Big(\prod_{a\in A}c_a\Big)\ocirc\Big(\prod_{b\in B}d_b\Big)\iso\prod_{(a,b)\in A\times B}(c_a\cdot d_b).
\]
\end{proposition}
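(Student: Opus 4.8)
The plan is to obtain this monoidal structure by dualizing \cref{parcon1} rather than reconstructing a Day convolution directly on $\Pi\cat{C}$. The excerpt already records the duality $(\Sig\cat{C}\op)\op\approx\Pi\cat{C}$, so I would exploit the facts that a monoidal category induces a monoidal structure on its opposite, and that the free coproduct completion of $\cat{C}\op$ becomes, after passing to opposites, exactly the free product completion $\Pi\cat{C}$.

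First I would note that if $(\cat{C},e,\cdot)$ is monoidal, then $\cat{C}\op$ is monoidal with the same unit $e$ and the same product on objects (and reversed morphisms). Applying \cref{parcon1} to $\cat{C}\op$ then produces a distributive monoidal category $(\Sig\cat{C}\op,e,\odot)$ whose product is computed on objects by
\[
    \left(\sum_{a\in A}c_a\right)\odot\left(\sum_{b\in B}d_b\right)\iso\sum_{(a,b)\in A\times B}(c_a\cdot d_b).
\]
Next I would take opposite categories throughout. Because the opposite of any monoidal category is again monoidal (with the same unit and the same product on objects), and because $(\Sig\cat{C}\op)\op\approx\Pi\cat{C}$, this transports the structure onto $\Pi\cat{C}$, under which the coproducts of $\Sig\cat{C}\op$ are reinterpreted as the products of $\Pi\cat{C}$. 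Writing $\ocirc$ for the resulting product and reading the displayed formula in the opposite category turns every coproduct into a product, yielding precisely
\[
    \left(\prod_{a\in A}c_a\right)\ocirc\left(\prod_{b\in B}d_b\right)\iso\prod_{(a,b)\in A\times B}(c_a\cdot d_b),
\]
as claimed.

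Since the substantive work is already encapsulated in \cref{parcon1} and in the duality between the free product and coproduct completions, there is essentially no fresh computation to carry out. The one point that warrants care---and the most likely source of a slip---is the bookkeeping around opposite monoidal conventions: I must verify that passing to $\cat{C}\op$ and then back via $(-)\op$ recovers the original product $\cdot$ on objects rather than its reverse, so that the unit stays $e$ and the associator and unitors on $\Pi\cat{C}$ are exactly the reversed images of those supplied by \cref{parcon1}. Once these coherence isomorphisms are identified as the duals of the ones already constructed, the pentagon and triangle axioms follow automatically, as they hold in $\Sig\cat{C}\op$.
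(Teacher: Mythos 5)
Your proposal is correct and matches the paper's approach exactly: the paper offers no separate proof of \cref{parcon2}, instead stating that the monoidal product on $\Pi\cat{C}$ is obtained by dualizing \cref{parcon1} via the equivalence $\Pi\cat{C}\approx(\Sig\cat{C}\op)\op$, which is precisely your argument. Your additional care about opposite monoidal conventions (unit, associators, unitors) is a sound elaboration of the same dualization.
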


Thus, to obtain a monoidal structure on $\Sig\Pi\cat{C}$, we may first lift the monoidal structure on $\cat{C}$ to $\Pi\cat{C}$, then lift the monoidal structure on $\Pi\cat{C}$ to $\Sig\Pi\cat{C}$.

\begin{proposition}
There is a monoidal structure on $\Sig \Pi \cat{C}$ with unit $e$ whose monoidal product, which we call the \textbf{\textit{parallel product}} and denote by $\otimes$, is given by
\[
    \Big(\sum_{i\in I}\prod_{a\in A_i}c_{i,a}\Big)\otimes\Big(\sum_{j\in J}\prod_{b\in B_j}d_{j,b}\Big)\iso\sum_{i\in I}\sum_{j\in J}\prod_{a\in A_i}\prod_{b\in B_j}(c_{i,a}\cdot d_{j,b}).
\]
\end{proposition}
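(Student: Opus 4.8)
The plan is to build the monoidal structure on $\Sig\Pi\cat{C}$ by iterating the two lifting results already established, exactly as the surrounding prose suggests. Recall that \cref{parcon2} equips $\Pi\cat{C}$ with a distributive monoidal product $\ocirc$ and unit $e$ (the unit viewed as the one-factor product $\prod_{\ast\in\1}e$), and that \cref{prop.free-sum-gets-prods} shows $\Pi\cat{C}$ has all small products that distribute over the coproducts added by $\Sig$. So I would first observe that $(\Pi\cat{C},e,\ocirc)$ is a \emph{distributive} monoidal category in the sense required to apply \cref{parcon1}, with $\ocirc$ playing the role of the base product $\cdot$. Then \cref{parcon1}, applied to $\Pi\cat{C}$ in place of $\cat{C}$, immediately produces a monoidal structure on $\Sig(\Pi\cat{C}) = \Sig\Pi\cat{C}$ via Day convolution, with unit the image of $e$ under the embedding $\Pi\cat{C}\inj\Sig\Pi\cat{C}$.

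The second step is to compute the formula explicitly and verify it matches the claimed expression. Writing $p = \sum_{i\in I}\prod_{a\in A_i}c_{i,a}$ and $q = \sum_{j\in J}\prod_{b\in B_j}d_{j,b}$, the distributivity formula from the proof of \cref{parcon1} (reused with $\ocirc$ as the base product) gives
\[
    p\otimes q \iso \sum_{(i,j)\in I\times J}\left(\left(\prod_{a\in A_i}c_{i,a}\right)\ocirc\left(\prod_{b\in B_j}d_{j,b}\right)\right).
\]
Substituting the formula for $\ocirc$ from \cref{parcon2} into each summand turns the inner factor into $\prod_{(a,b)\in A_i\times B_j}(c_{i,a}\cdot d_{j,b})$, and re-indexing $\prod_{(a,b)\in A_i\times B_j}$ as the iterated product $\prod_{a\in A_i}\prod_{b\in B_j}$ yields precisely the stated isomorphism. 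This confirms that $\otimes$ agrees with the desired formula on objects.

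The only genuine content beyond bookkeeping is checking that \cref{parcon1} applies in this iterated setting, so I expect that to be the main obstacle—though a mild one. The subtlety is that \cref{parcon1} was stated for a monoidal category $\cat{C}$, and here we are feeding it $\Pi\cat{C}$, which is not a fresh base but a free product completion already carrying products. What must be checked is that $\Pi\cat{C}$ satisfies the hypotheses implicitly used in the proof of \cref{parcon1}: that its monoidal product preserves coproducts in each variable once we pass to $\Sig\Pi\cat{C}$, so that the Day convolution restricts to coproducts of representables and distributes over them. Since $\ocirc$ on $\Pi\cat{C}$ is defined factorwise via $\cdot$ and the coproducts in question are those freely adjoined by $\Sig$, this preservation is formal; the coend defining Day convolution preserves the relevant colimits just as in the original proof. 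I would therefore phrase this step as an application of \cref{parcon1} with $(\Pi\cat{C},e,\ocirc)$ in the role of the base monoidal category, noting that all coherence data (associator, unitors) transport automatically from the general construction.

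Finally, coherence of $\otimes$—associativity and unitality up to coherent isomorphism—requires no separate verification: it is inherited from the coherence already furnished by \cref{parcon1} and \cref{parcon2}, since $\otimes$ is literally the composite of those two constructions rather than an independently defined operation.
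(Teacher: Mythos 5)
Your proposal is correct and follows exactly the paper's own route: the paper's proof is precisely the one-line composite ``apply \cref{parcon2} to $(\cat{C},e,\cdot)$ to get $(\Pi\cat{C},e,\ocirc)$, then apply \cref{parcon1} to $(\Pi\cat{C},e,\ocirc)$ to get $(\Sig\Pi\cat{C},e,\otimes)$,'' which is your first step. Your additional work---unwinding the Day convolution formula to check it matches the stated expression and noting that coherence is inherited---is sound elaboration of details the paper leaves implicit.
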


\begin{proof}
Apply \cref{parcon2} on $(\cat{C},e,\odot)$ to obtain $(\Pi\cat{C},e,\ocirc)$, then apply \cref{parcon1} on $(\Pi\cat{C},e,\ocirc)$ to obtain $(\Sig\Pi\cat{C},e,\otimes)$. \end{proof}

\begin{example}
To justify our use of the name ``parallel product," we consider an example.
Let $\cat{C}\coloneqq\cat1$, whose unique object we call $\yon$.
There is a unique monoidal structure on $\cat{C}$ given by $\yon\cdot\yon=\yon$.

Following \cite{polybook}, in $\Sig\Pi\cat{1}\approx\poly$, an object $\sum_{i\in I}\prod_{a\in A_i}\yon$, which we denote by $\sum_{i\in I}\yon^{A_i}$ for short, can be thought of as an \textbf{\textit{interface}}, with a number of possible \textit{positions} from $I$ it could expose and, according to the position $i\in I$ it is currently exposing, a number of possible \textit{directions} from $A_i$ it could receive.
A morphism $\phi\c\sum_{i\in I}\yon^{A_i}\to\sum_{i'\in I'}\yon^{A'_{i'}}$ in $\poly$ can then be viewed as an \textbf{\textit{interaction protocol}} between interfaces.
On positions, $\phi$ converts any position $i\in I$ that the domain could expose to a position $\phi i\in I'$ for the codomain to expose; then on directions, $\phi$ converts any direction $a'\in A'_{\phi i}$ that the codomain could receive to a direction $\phi^\sharp_i a'\in A_i$ for the domain to receive.

Then taking the parallel product of two such interaction protocols yields a single interaction protocol that models the two original protocols simultaneously---or in \textit{parallel}.
More concretely, given interaction protocols $\phi\c\sum_{i\in I}\yon^{A_i}\to\sum_{i'\in I'}\yon^{A'_{i'}}$ and $\psi\c\sum_{j\in J}\yon^{B_j}\to\sum_{j'\in J'}\yon^{B'_{j'}}$, their parallel product $\phi\otimes\psi$ converts a pair of positions $(i,j)\in I\times J$ from its domain
\[
    \Big(\sum_{i\in I}\yon^{A_i}\!\Big)\otimes\Big(\sum_{j\in J}\yon^{B_j}\!\Big)\iso\sum_{(i,j)\in I\times J}(\yon\cdot\yon)^{A_i\times B_j}\iso\sum_{(i,j)\in I\times J}\yon^{A_i\times B_j}
\]
to the pair of positions $(\phi i, \psi j)\in I'\times J'$ from its codomain
\[
    \Big(\sum_{i'\in I'}\yon^{A'_{i'}}\!\Big)\otimes\Big(\sum_{j'\in J'}\yon^{B'_{j'}}\!\Big)\iso\sum_{(i',j')\in I'\times J'}\yon^{A'_{i'}\times B'_{j'}}
\]
by applying the on-positions functions of $\phi$ and $\psi$ in parallel; then converts a pair of directions $(a',b')\in A'_{\phi i}\times B'_{\psi j}$ from its codomain to the pair of directions $(\phi^\sharp_i a', \psi^\sharp_j b')$ from its domain by applying the on-directions functions of $\phi$ and $\psi$ in parallel.
\end{example}

\subsection{The composition product}

Here we introduce another monoidal structure on $\Sig\Pi\cat{C}$ induced by the monoidal product on $\cat{C}$.

\begin{definition}
The \textbf{\textit{composition product}} $\tri$ of two objects in $\Sig\Pi\cat{C}$ is given by
\[
    \Big(\sum_{i\in I}\prod_{a\in A_i}u_{i,a}\Big)\tri\Big(\sum_{j\in J}\prod_{b\in B_j}v_{j,b}\Big)\coloneqq\sum_{i\in I}\prod_{a\in A_i}\sum_{j\in J}\prod_{b\in B_j}(u_{i,a}\cdot v_{j,b}).
    \qedhere
\]
\end{definition}

We call this the \textit{composition} product as it generalizes the composition operation on polynomial functors when $\cat{C}=\cat1$: composing $\sum_{i\in I}\prod_{a\in A_i}\yon$ with $\sum_{j\in J}\prod_{b\in B_j}\yon$ yields the functor $\sum_{i\in I}\prod_{a\in A_i}\sum_{j\in J}\prod_{b\in B_j}\yon$.
Distributivity, as given by \cref{eqn.distrib}, yields the following alternate form for this product.

\begin{lemma} \label{lemma.comp}
The composition product can be rewritten as
\[
    \Big(\sum_{i\in I}\prod_{a\in A_i}u_{i,a}\Big)\tri\Big(\sum_{j\in J}\prod_{b\in B_j}v_{j,b}\Big)\iso\sum_{i\in I}\:\sum_{j\c A_i\to J}\:\prod_{a\in A_i}\prod_{b\in B_{ja}}(u_{i,a}\cdot v_{ja,b}).
\]
\end{lemma}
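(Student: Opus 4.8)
The plan is to read off the claim as a single application of the distributive law from \cref{prop.free-sum-gets-prods}. By definition, the left-hand side is
\[
\sum_{i\in I}\,\prod_{a\in A_i}\left(\sum_{j\in J}\prod_{b\in B_j}(u_{i,a}\cdot v_{j,b})\right),
\]
where $\sum_{i\in I}$ is a coproduct in $\Sig\Pi\cat{C}$, each $\prod_{a\in A_i}$ is a product in $\Sig\Pi\cat{C}$ (which exists because $\Pi\cat{C}$ has all small products, so that \cref{prop.free-sum-gets-prods} applies to $\Sig\Pi\cat{C}$), and each factor $\sum_{j\in J}\prod_{b\in B_j}(u_{i,a}\cdot v_{j,b})$ is a coproduct of objects of $\Pi\cat{C}$.

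Since both sides are coproducts indexed by $i\in I$, it suffices to exhibit the isomorphism one summand at a time. First I would fix $i\in I$ and apply \cref{eqn.distrib} to the product-of-coproducts $\prod_{a\in A_i}\bigl(\sum_{j\in J}\cdots\bigr)$, taking the outer index set to be $A_i$, the inner (constant) coproduct index family to be $J_a\coloneqq J$, and the summand to be $c_{a,j}\coloneqq\prod_{b\in B_j}(u_{i,a}\cdot v_{j,b})$ in $\Pi\cat{C}$. Then $\prod_{a\in A_i}J_a=J^{A_i}$ is precisely the set of functions $\ol{j}\c A_i\to J$, and a choice function $\ol{j}$ selects the summand $c_{a,\ol{j}a}=\prod_{b\in B_{\ol{j}a}}(u_{i,a}\cdot v_{\ol{j}a,b})$, i.e.\ the substitution $j\mapsto\ol{j}a$ in the formula. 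Reassembling the coproduct over $i$ then yields the right-hand side verbatim.

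The calculation is essentially bookkeeping, so I do not expect a serious obstacle; the one point needing care is to confirm that the nested product $\prod_{a\in A_i}\prod_{b\in B_{\ol{j}a}}$ appearing in each summand really is the product in $\Pi\cat{C}$ over the flattened index set $\coprod_{a\in A_i}B_{\ol{j}a}$, rather than a merely formal nesting. This is immediate from the construction of $\Pi\cat{C}$, in which a product of products is again a product whose indexing set is the disjoint union of the inner indexing sets, so the two $\prod$ symbols on the right may be left uncollapsed without ambiguity.
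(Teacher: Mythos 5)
Your proof is correct and matches the paper's own reasoning: the paper derives \cref{lemma.comp} exactly by applying the distributive law \eqref{eqn.distrib} from \cref{prop.free-sum-gets-prods} to the inner product-of-coproducts in the definition of $\tri$, one coproduct summand $i\in I$ at a time, with the choice functions $\ol{j}\c A_i\to J$ arising as elements of $\prod_{a\in A_i}J$. Your additional check that nested products in $\Pi\cat{C}$ flatten over the disjoint union of index sets is a sensible (if routine) point of care, and nothing in the argument is missing.
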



\begin{proposition}
There is a monoidal category $(\Sig\Pi\cat{C},e,\tri)$.
\end{proposition}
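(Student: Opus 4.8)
The plan is to establish that $(\Sig\Pi\cat{C}, e, \tri)$ is a monoidal category by exhibiting a unit object, a unitor, and an associator, then verifying the coherence conditions. I would first identify the unit. Since the composition product reduces to polynomial-functor composition when $\cat{C} = \cat{1}$, and the identity for that is $\yon$, the natural candidate for the unit is the object $\sum_{\ast\in\1}\prod_{\ast\in\1}e$, i.e.\ a single position with a single direction carrying the monoidal unit $e$ of $\cat{C}$. Writing $p \coloneqq \sum_{i\in I}\prod_{a\in A_i}u_{i,a}$, I would check directly from the definition of $\tri$ that $e \tri p$ and $p \tri e$ are naturally isomorphic to $p$, using the left and right unitors of $(\cat{C}, e, \cdot)$ together with the fact that indexing over the singleton $\1$ collapses the corresponding sums and products. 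These left and right unitor isomorphisms should be built componentwise out of the unitors of $\cat{C}$.

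\medskip

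The heart of the argument is constructing the associator, and this is the step I expect to be the main obstacle. Given three polynomials $p$, $q$, and $r$, I would expand both $(p \tri q) \tri r$ and $p \tri (q \tri r)$ into their full alternating sum-product form. The bookkeeping is delicate because the indexing sets on each side are nested differently: on one side the directions of $p \tri q$ (which, by \cref{lemma.comp}, already involve a choice of section $\ol{j}\c A_i \to J$) index the next layer of summation, whereas on the other side the directions of $q \tri r$ are what feed into the outer composition. The strategy is to use \cref{lemma.comp} to flatten each triple product into a single normal form indexed by a position of $p$, a dependent choice of positions of $q$, and a further dependent choice of positions of $r$, together with a compatible family of directions; then I would exhibit a canonical bijection between the two resulting index sets and use the associator of $(\cat{C}, e, \cdot)$ on the predicate objects $u \cdot v \cdot w$. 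The associativity of $\cdot$ and of the underlying set-level distributive law (\cref{eqn.distrib}) are exactly what make this bijection well-defined and natural.

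\medskip

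Once the structural isomorphisms are in place, it remains to verify the triangle and pentagon coherence axioms. I would argue that these reduce to the corresponding coherence axioms for $(\cat{C}, e, \cdot)$: because $\tri$ is assembled from $\cdot$ on predicates and from canonical set-theoretic reindexing bijections, and because the set-theoretic bijections satisfy their own (trivial) coherence, the pentagon for $\tri$ commutes precisely because the pentagon for $\cdot$ does, applied componentwise. Rather than grinding through the full diagram chase, I would note that each component of the coherence diagrams is a product-sum reindexing whose commutativity is governed entirely by the coherence of $\cat{C}$ and of $\sets$, and invoke this componentwise reduction. An alternative, cleaner route I would consider is to avoid the explicit associator entirely by recognizing $\tri$ as arising from a composite of known monoidal or operadic structures---exhibiting $\Sig\Pi\cat{C}$ as a category of certain functors or profunctors under a substitution-like product---so that associativity and coherence come for free from a universal property; but the direct componentwise verification is the more elementary and self-contained approach, so that is the one I would carry out in detail.
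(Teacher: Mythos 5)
Your outline of the unit object, the associator, and the coherence reduction is sound as far as it goes, and it matches what the paper itself dismisses in one word as ``routine.'' But there is a genuine gap: you never define $\tri$ on morphisms. The paper's definition of the composition product is only an object assignment, so before any unitor or associator can be called \emph{natural}---indeed before $(\Sig\Pi\cat{C},e,\tri)$ can be called monoidal at all---you must exhibit $\tri$ as a bifunctor $\Sig\Pi\cat{C}\times\Sig\Pi\cat{C}\to\Sig\Pi\cat{C}$. This is precisely the one piece the paper's proof takes the trouble to spell out, and it is the least routine piece. Writing $p\tri q$ in the normal form of \cref{lemma.comp}, a position of $p\tri q$ is a pair $(i,\ol{j}\c A_i\to J)$, i.e.\ it involves the \emph{directions} of $p$. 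Consequently, for morphisms $\phi$ and $\psi$, the on-positions function of $\phi\tri\psi$ cannot be assembled from on-positions data alone: it sends $(i,\ol{j})$ to $(\phi i,\;\psi\circ\ol{j}\circ\phi^\sharp_i)$, mixing the forward part of $\phi$, the backward part $\phi^\sharp_i$, and the forward part of $\psi$; the on-directions and on-predicates components interleave the two morphisms in a similarly mixed-variance way.

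Your proposal treats the structural isomorphisms as if they required only set-level reindexing bijections plus the associator and unitors of $(\cat{C},e,\cdot)$ applied componentwise. That is adequate for the object-level isomorphisms $e\tri p\iso p\iso p\tri e$ and $(p\tri q)\tri r\iso p\tri(q\tri r)$, but the naturality squares for these isomorphisms cannot even be written down, let alone checked, until the morphism action above is in place---and because that action entangles positions with directions, its functoriality (preservation of composition and identities) is itself a computation worth recording. Once you add this construction, the remainder of your plan (unit via the singleton-indexed embedding of $e$, associator via \cref{lemma.comp} and the distributive law, pentagon and triangle by componentwise reduction to the coherence of $\cat{C}$ and of $\sets$) goes through as stated and agrees with what the paper leaves implicit.
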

\begin{proof}
Routine, but we will describe the behavior of $\tri$ on morphisms: given \[
    \phi\c\sum_{i\in I}\prod_{a\in A_i}u_{i,a}\to\sum_{k\in K}\prod_{c\in C_k}w_{k,c} \quad\text{and}\quad \psi\c\sum_{j\in J}\prod_{b\in B_j}v_{j,b}\to\sum_{\ell\in L}\prod_{d\in D_\ell}x_{\ell,d},
\]
the morphism
\[
    \phi\tri\psi\c\sum_{i\in I}\:\sum_{j\c A_i\to J}\:\prod_{a\in A_i}\prod_{b\in B_{ja}}(u_{i,a}\cdot v_{ja,b})\to\sum_{k\in K}\:\sum_{\ell\c C_k\to L}\:\prod_{c\in C_k}\prod_{d\in D_{\ell c}}(w_{k,c}\cdot x_{\ell c,d})
\]
(whose domain and codomain we have rewritten using \cref{lemma.comp}) consists of the following data:

\newpage

\begin{itemize}
    \item an \textit{on-positions function} $\phi\tri\psi\c\sum_{i\in I}J^{A_i}\to\sum_{k\in K}L^{C_k}$ consisting of:
    \begin{itemize}
        \item a function $I\to K$ given by $\phi$;
        \item for each $i\in I$, a function $J^{A_i}\to L^{C_{\phi i}}$ given by precomposing $\phi^\sharp_i\c C_{\phi i}\to A_i$ and postcomposing $\psi\c J\to L$;
    \end{itemize}
    \item for each $i\in I$ and $j\c A_i\to J$, sent to $\phi i\in K$ and $\psi j\phi^\sharp_i\c C_{\phi i}\to L$ by the on-positions function, an \textit{on-directions function} $(\phi\tri\psi)^\sharp_{i,j}\c\sum_{c\in C_{\phi i}}D_{\psi j\phi^\sharp_i c}\to\sum_{a\in A_i}B_{ja}$ consisting of:
    \begin{itemize}
        \item a function $C_{\phi i}\to A_i$ given by $\phi^\sharp_i$;
        \item for each $c\in C_{\phi i}$, a function $D_{\psi j\phi^\sharp_i c}\to B_{j\phi^\sharp_i c}$ given by $\psi^\sharp_{j\phi^\sharp_i c}$;
    \end{itemize}
    \item for each $i\in I, j\c A_i\to J, c\in C_{\phi i},$ and $d\in D_{\psi j\phi^\sharp_i c}$, sent to $\phi^\sharp_i\c C_{\phi i}\to A_i$ and $\psi^\sharp_{j\phi^\sharp_i c}\c D_{\psi j\phi^\sharp_i c}\to B_{j\phi^\sharp_i c}$ by the on-directions function, an \textit{on-predicates map} $(\phi\tri\psi)_{i,j,c,d}\c u_{i,\phi^\sharp_i c}\cdot v_{j',\psi^\sharp_{j'} d}\to w_{\phi i,c}\cdot x_{\psi j',d}$ (here $j'\coloneqq j\phi^\sharp_i c$) given by $\phi_{i,c}\cdot\psi_{j',d}$. \qedhere
\end{itemize}
\end{proof}

\section{Composition comonoids as enriched categories} \label{sec.com}

Our main result concerns the category of comonoids in $(\Sig\Pi\cat{C},e,\tri)$.
We will show that it is equivalent to a category whose objects are enriched categories and whose morphisms are enriched cofunctors.
While the former may be more familiar than the latter, we review both these definitions here.

Recall the definition of a \textit{category enriched over a monoidal category} from Kelly \cite{enrich}. We restate it here for the special case where the enriching category is $(\Sig\cat{C}\op,e,\odot)$.

\begin{definition}
A \textbf{\textit{small $(\Sig\cat{C}\op,e,\odot)$-enriched category}} $\ecat{A}$, with $\odot$ defined as in Proposition \ref{parcon1}, consists of the following data:
\begin{itemize}
    \item a set $\Ob\ecat{A}$ (or just $\ecat{A}$) of \textbf{objects};
    \item for each $x,y\in\Ob\ecat{A}$, a \textbf{hom-family} $\sum_{f\c x\to y}|f|\in\Sig\cat{C}\op$ consisting of:
    \begin{itemize}
        \item a set $\ecat{A}(x,y)$ of \textbf{morphisms}, i.e.\ a \textbf{hom-set}, with $f\in\ecat{A}(x,y)$ denoted by $f\c x\to y$;
        \item for each morphism $f\c x\to y$, a \textbf{weight} $|f|\in\cat{C}$;
    \end{itemize}
    \item for each $x\in\Ob\ecat{A}$, a morphism $e\to\sum_{f\c x\to x}|f|$ in $\Sig\cat{C}\op$ consisting of:
    \begin{itemize}
        \item an \textbf{identity morphism} $\id_x\c x\to x$;
        \item an \textbf{identity map} $\eta_x\c|\id_x\!|\to e$ from $\cat{C}$;
    \end{itemize}
    \item for each $x,y,z\in\Ob\ecat{A}$, a morphism
    \[
        \sum_{f\c x\to y}\;\sum_{g\c y\to z}(|f|\cdot|g|) \to \sum_{h\c x\to z}|h|
    \]
    in $\Sig\cat{C}\op$ consisting of, for each $f\c x\to y$ and $g\c y\to z$:
    \begin{itemize}
        \item a \textbf{composite morphism} $gf\c x\to z$;
        \item a \textbf{composite map} $\mu_{f,g}\c|gf|\to|f|\cdot|g|$ from $\cat{C}$.
    \end{itemize}
\end{itemize}
Here $w,x,y,z\in\Ob\ecat{A}$ and $f\c w\to x$, $g\c x\to y$, and $h\c y\to z$ must satisfy the following:

\begin{itemize}
    \item \textbf{unitality}, that $f\id_w = f = \id_x f$ and the following diagram commutes in $\cat{C}$, up to unitors:
    \[
    \begin{tikzcd}[column sep=large]
        |\id_w\!|\cdot|f|
        \ar[dr, "\eta_w\cdot|f|"']
        &
        \ar[l, "\mu_{\id_w,f}"']
        |f|
        \ar[r, "\mu_{f,\id_x}"]
        \ar[d, equals]
        &
        |f|\cdot|\id_x\!|
        \ar[dl, "|f|\cdot\eta_x"]
        \\
        &
        |f|
    \end{tikzcd}
    \]

    \newpage
    
    \item \textbf{associativity}, that $(hg)f=h(gf)$ and the following commutes in $\cat{C}$, up to associators:
    \[
    \begin{tikzcd}[column sep=large]
        |hgf|
        \ar[r, "\mu_{f,hg}"]
        \ar[d, "\mu_{gf,h}"']
        &
        |f|\cdot|hg|
        \ar[d, "|f|\cdot\mu_{g,h}"]
        \\
        |gf|\cdot|h|
        \ar[r, "\mu_{f,g}\cdot|h|"]
        &
        |f|\cdot|g|\cdot|h|
    \end{tikzcd}
    \]
\end{itemize}
\end{definition}

By Proposition \ref{parcon1}, the monoidal category $(\Sig\cat{C}\op,e,\odot)$ is distributive, so there exists a notion of a \textit{$(\Sig\cat{C}\op,e,\odot)$-enriched cofunctor} as introduced by Clarke and Di Meglio \cite{enr-cof}.
We restate the definition of an enriched cofunctor in this special case here.

\begin{definition}
A \textbf{\textit{$(\Sig\cat{C}\op,e,\odot)$-enriched cofunctor} }$\Phi\c\ecat{A}\cof\ecat{B}$ between small $(\Sig\cat{C}\op,e,\odot)$-enriched categories $\ecat{A}$ and $\ecat{B}$ consists of the following data:
\begin{itemize}
    \item a function $\Phi\c\Ob\ecat{A}\to\Ob\ecat{B}$;
    \item for each $a\in\ecat{A}, b\in\ecat{B},$ and morphism $f\c\Phi a\to b$ from $\ecat{B}$:
    \begin{itemize}
        \item a morphism $\Phi^\sharp_af\c a\to x$ from $\ecat{A}$ with $\Phi x = b$;
        \item a morphism $\Phi_{a,f}\c|\Phi^\sharp_af|\to|f|$ from $\cat{C}$.
    \end{itemize}
\end{itemize}
Here $a,x\in\ecat{A}$; $b,b'\in\ecat{B}$; $f\c\Phi a\to b$ with $\Phi^\sharp_af\c a\to x$; and $g\c b\to b'$ must satisfy:
\begin{itemize}
    \item \textbf{preservation of identities}, that $\Phi^\sharp_a(\id_{\Phi a})=\id_a$ and the following commutes in $\cat{C}$:
    \[
    \begin{tikzcd}[column sep=large]
        |\id_a\!| \ar[rd, "\eta_a"'] \ar[r, "\Phi_{a,\id_{\Phi a}}"] & |\id_{\Phi a}\!| \ar[d, "\eta_{\Phi a}"] \\
        & e
    \end{tikzcd}
    \]
    \item \textbf{preservation of composites}, that $\Phi^\sharp_a(gf)=\Phi^\sharp_{x}(g)\Phi^\sharp_a(f)$ and the following commutes in $\cat{C}$:
    \[
    \begin{tikzcd}[column sep=large]
        |\Phi^\sharp_a(gf)| \ar[d, "\Phi_{a,gf}"'] \ar[r, "\mu_{\Phi^\sharp_af,\Phi^\sharp_xg}"] & |\Phi^\sharp_af| \cdot |\Phi^\sharp_xg| \ar[d, "\Phi_{a,f}\cdot\Phi_{x,g}"] \\
        |gf| \ar[r, "\mu_{f,g}"] & |f|\cdot|g|
    \end{tikzcd}
    \]
\end{itemize}   
\end{definition}
There is then a category whose objects are small $(\Sig\cat{C}\op,e,\odot)$-enriched categories and whose morphisms are $(\Sig\cat{C}\op,e,\odot)$-enriched cofunctors.
While enriched cofunctors differ from enriched functors, it is nevertheless the case that an isomorphism in this category corresponds to our usual notion of isomorphism of enriched categories as defined by a pair of invertible enriched functors.

The following is a generalization of a result by Ahman and Uustalu \cite{poly-comonad,comonad-cofunc}: that the category of polynomial comonads is equivalent to the category of small categories and cofunctors, corresponding to the case where $\cat{C}=\cat1$ (the $\sets$-enriched case, for $\Sig\cat1\op\approx\sets$) in the theorem below.

\begin{theorem} \label{thm}
The category of comonoids in the monoidal category $(\Sig\Pi\cat{C},e,\tri)$ is equivalent to the category of small $(\Sig\cat{C}\op,e,\odot)$-enriched categories and enriched cofunctors.
\end{theorem}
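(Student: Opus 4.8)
The plan is to exhibit mutually inverse assignments between comonoids in $(\Sig\Pi\cat{C},e,\tri)$ and small $(\Sig\cat{C}\op,e,\odot)$-enriched categories, do the same for comonoid morphisms and enriched cofunctors, and check functoriality to obtain the equivalence. Throughout I would follow the template of Ahman and Uustalu's argument in \cite{poly-comonad,comonad-cofunc} for the case $\cat{C}=\cat1$, reading the enriched data directly off the comonoid data and translating the comonoid axioms into the enriched axioms, while tracking the extra layer of $\cat{C}$-valued predicates and the contravariance in $\Sig\cat{C}\op$.

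First I would unpack a comonoid structure on a carrier $p=\sum_{i\in I}\prod_{a\in A_i}c_{i,a}$. The objects of the associated enriched category $\ecat{A}$ are the positions $I$. The counit $\eps\c p\to e$ supplies, for each $i$, a distinguished direction $\id_i\in A_i$ together with a weight map $\eta_i\c c_{i,\id_i}\to e$ in $\cat{C}$; these become the identity morphisms and identity maps. Rewriting $p\tri p$ via \cref{lemma.comp}, its positions are pairs $(i,\ol j)$ with $\ol j\c A_i\to I$, so the comultiplication $\delta\c p\to p\tri p$ has an on-positions component $i\mapsto(\sigma i,\ol j_i)$; the counit law $(\id_p\tri\eps)\circ\delta=\id_p$ forces $\sigma=\id_I$, and $(\eps\tri\id_p)\circ\delta=\id_p$ forces $\ol j_i(\id_i)=i$. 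Thus each $\ol j_i\c A_i\to I$ is a codomain assignment, giving a decomposition $A_i\iso\sum_{y\in I}\ecat{A}(i,y)$ into hom-sets, with $c_{i,a}$ the weight $|f|$ of the corresponding morphism $f$. The on-directions component $\delta^\sharp_i\c\sum_{a\in A_i}A_{\ol j_i a}\to A_i$ is then composition of morphisms, and the on-predicates component $\delta_{i,(a,b)}\c c_{i,\delta^\sharp_i(a,b)}\to c_{i,a}\cdot c_{\ol j_i a,b}$ is the composite map $\mu_{f,g}\c|gf|\to|f|\cdot|g|$, the reversal of arrows being precisely the passage to $\Sig\cat{C}\op$. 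I would then verify that the two counit laws are equivalent to unitality and that coassociativity is equivalent to associativity.

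Next I would treat morphisms. A comonoid morphism $\phi\c(p,\eps_p,\delta_p)\to(q,\eps_q,\delta_q)$, with $q=\sum_{k\in K}\prod_{c\in C_k}w_{k,c}$, has an on-positions function $\phi\c I\to K$, a backward on-directions function $\phi^\sharp_i\c C_{\phi i}\to A_i$, and on-predicates maps $\phi_{i,c}\c c_{i,\phi^\sharp_i c}\to w_{\phi i,c}$. Under the dictionary above, $\phi$ on positions is the object assignment $\Phi$; compatibility of $\phi$ with the on-positions part of $\delta$ guarantees that $\phi^\sharp_i$ sends a morphism $f\c\Phi a\to b$ of $\ecat{B}$ to a morphism $\Phi^\sharp_a f\c a\to x$ of $\ecat{A}$ with $\Phi x=b$; and $\phi_{i,c}$ is the weight map $\Phi_{a,f}\c|\Phi^\sharp_a f|\to|f|$. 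I would then check that compatibility of $\phi$ with the counits is preservation of identities and that compatibility with the comultiplications is preservation of composites, recovering exactly the cofunctor axioms of \cite{enr-cof}. Reversing each reading builds the inverse assignment, and since identities and composition of comonoid morphisms visibly match those of enriched cofunctors, the two assignments are mutually inverse up to the canonical identification $A_i\iso\sum_{y}\ecat{A}(i,y)$, yielding the equivalence.

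The main obstacle I anticipate is the coassociativity–associativity step. Verifying it requires writing out the triple product $p\tri p\tri p$ and comparing the two bracketings $(\delta\tri\id_p)\circ\delta$ and $(\id_p\tri\delta)\circ\delta$ through the associator of $\tri$ and the iterated distributive law behind \cref{lemma.comp}; one must confirm that the on-directions data yields $(hg)f=h(gf)$ and that the on-predicates data yields the associativity coherence square of the enriched definition, with every arrow reversed consistently because the enrichment is over $\Sig\cat{C}\op$ rather than $\Sig\cat{C}$. Keeping this contravariance and the coherence isomorphisms aligned---rather than any single conceptual difficulty---is where the real bookkeeping lies; the unit laws and the cofunctor conditions then follow by the same method with lighter computation.
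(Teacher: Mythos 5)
Your proposal is correct and establishes exactly the correspondence the paper uses---positions as objects, directions as morphisms grouped by a codomain assignment, predicates as weights, counit as identities, comultiplication as composition and $\mu$---so the mathematical content matches; the difference is one of direction. The paper's proof starts from an enriched category $\ecat{A}$, writes down the comonoid $\sum_{i\in\ecat{A}}\prod_{a\c i\to\_}|a|$ with counit given by $(\id_i,\eta_i)$ and comultiplication given by $(\cod,\text{composition},\mu_{b,c})$, and declares the inverse construction ``evident''; you instead start from an arbitrary comonoid and derive the enriched structure, which is precisely the direction the paper glosses over. Your derivation that the counit laws force the on-positions part of $\delta$ to be $i\mapsto(i,\ol j_i)$ with $\ol j_i(\id_i)=i$---so that $\ol j_i$ is a codomain map splitting $A_i\iso\sum_{y}\ecat{A}(i,y)$ into hom-sets---is the genuine Ahman--Uustalu-style work hidden inside the paper's remark that ``from any comonoid we can recover its corresponding enriched category,'' so your write-up complements rather than duplicates the paper's emphasis. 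One item to add to your bookkeeping list: besides $(hg)f=h(gf)$ and the weight coherence square, the on-positions part of coassociativity is what forces $\ol j_i(\delta^\sharp_i(a,b))=\ol j_{\ol j_i a}(b)$, i.e.\ $\cod(gf)=\cod(g)$ (an equation the paper records explicitly); this must be extracted \emph{before} the directions part of coassociativity can be read as associativity at all, since without it $\delta^\sharp_i$ does not define a composition landing in the correct hom-set.
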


\begin{proof}
First, we describe how to construct a comonoid in $(\Sig\Pi\cat{C},e,\tri)$ from each $(\Sig\cat{C}\op,e,\odot)$-enriched category; the inverse construction will then be evident.
Given a small $(\Sig\cat{C}\op,e,\odot)$-enriched category~$\ecat{A}$, define a polynomial in $\cat{C}$ with positions $\Ob\ecat{A}$, directions $A_i\coloneqq\sum_{j\in\ecat{A}}\ecat{A}(i,j)$ for $i\in\ecat{A}$, and predicate $|a|\in\cat{C}$ for $i\in\ecat{A}$ and $(j,a\c i\to j)\in A_i$.
In other words: positions are objects, directions are morphisms of a given domain, and predicates are the morphisms' weights.
We endow this polynomial $\sum_{i\in\ecat{A}}\prod_{a\c i\to\_}|a|$ (where $a\c i\to\_$ denotes a morphism $a$ in $\ecat{A}$ with domain $i$ and arbitrary codomain) with a comonoid structure as follows.
Its counit
\[
    \eps\c\sum_{i\in\ecat{A}}\;\prod_{a\c i\to\_}|a|\to e
\]
is trivial on positions, the assignment $i\mapsto\id_i$ on directions, and the identity map $\eta_i\c|\id_i\!|\to e$ on predicates.
Meanwhile its comultiplication
\[
    \delta\c\sum_{i\in\ecat{A}}\;\prod_{a\c i\to\_}|a|\to\Big(\sum_{i\in\ecat{A}}\;\prod_{b\c i\to\_}|b|\Big)\tri\Big(\sum_{j\in\ecat{A}}\;\prod_{c\c j\to\_}|c|\Big)\iso\sum_{i\in\ecat{A}}\;\sum_{j\c A_i\to\Ob\ecat{A}}\;\prod_{b\c i\to\_}\;\prod_{c\c jb\to\_}|b|\cdot|c|
\]
is the assignment $i\mapsto(i,\cod)$ on positions, where $\cod\c A_i\to\Ob\ecat{A}$ sends each morphism $a\c i\to j$ to its codomain $j$; morphism composition on directions, sending $b\c i\to\_$ and $c\c\cod(b)\to\_$ to $cb\c i\to\_$; and the composite map $\mu_{b,c}\c|cb|\to|b|\cdot|c|$ on predicates.
The counitality and coassociativity of the comonoid follow from the unitality and associativity of the enriched category, as well as the equations $\cod(\id_i)=i$ and $\cod(cb)=\cod(c)$.
Moreover, from any comonoid we can recover its corresponding enriched category up to isomorphism.

Next, we describe how to construct a morphism of comonoids in $(\Sig\Pi\cat{C},e,\tri)$ from each $(\Sig\cat{C}\op,e,\odot)$-enriched cofunctor; again the inverse construction will then be evident.
Given a $(\Sig\cat{C}\op,e,\odot)$-enriched cofunctor $\Phi\c\ecat{A}\cof\ecat{B}$ between small $(\Sig\cat{C}\op,e,\odot)$-enriched categories $\ecat{A}$ and $\ecat{B}$, we construct a structure-preserving morphism
\[
    \phi\c\sum_{i\in\ecat{A}}\prod_{a\c i\to\_}|a|\to\sum_{j\in\ecat{B}}\prod_{b\c j\to\_}|b|
\]
in $\Sig\Pi\cat{C}$ between the comonoids corresponding to $\ecat{A}$ and $\ecat{B}$ like so.
On positions, set $\phi i\coloneqq\Phi i\in\ecat{B}$ for $i\in\ecat{A}$; on directions, set $\phi^\sharp_i b\coloneqq(\Phi^\sharp_ib\c i\to\_)$ in $\ecat{A}$ for $i\in\ecat{A}$ and $b\c\Phi i\to\_$ in $\ecat{B}$; and on predicates, set $\phi_{i,b}\coloneqq(\Phi_{i,b}\c|\Phi^\sharp_ib|\to|b|)$ in $\cat{C}$ for $i\in\ecat{A}$ and $b\c\Phi i\to\_$ in $\ecat{B}$.
That $\phi$ preserves counits and comultiplications follows from the fact that $\Phi$ preserves identities and composites and that $\Phi(\cod(\Phi^\sharp_ia))=\cod(a)$.
Moreover, from any comonoid morphism we can recover its corresponding enriched cofunctor.
\end{proof}

\section{Application: compositional bounds on dynamical systems} \label{sec.app}

Here we give an example of how the structure of $\Sig\Pi\cat{C}$ may be used to model open dynamical systems and their invariants.
This case study is by no means comprehensive; we seek only to hint at the possibilities of how $\Sig\Pi\cat{C}$ may be used.

Throughout, we let $(\cat{C},e,\cdot)\coloneqq([0,\infty]_\leq,0,+)$, the poset of nonnegative extended reals ordered by $\leq$ viewed as a category and endowed with the additive monoidal structure.
We take the free coproduct completion of its opposite category and endow it with a monoidal structure $\oplus$ given by Day convolution.
Then a $(\Sig[0,\infty]_\geq,0,\oplus)$-enriched category is an \textit{additively weighted category} \cite{weight}.

\newpage

\begin{definition}
    An \textbf{\textit{additively weighted category}} (or \textbf{\textit{weighted category}}) $\ecat{X}$ is a small $(\Sig[0,\infty]_\geq,0,\oplus)$-enriched category.
    It thus consists of the following data:
    \begin{itemize}
        \item a set $\Ob\ecat{X}$ of \textbf{objects} or \textbf{points};
        \item for each $x,y\in\Ob\ecat{X}$, an object $\sum_{p\c x\to y}|p|\in\Sig[0,\infty]_\geq$ consisting of:
        \begin{itemize}
            \item a set $\ecat{X}(x,y)$ of \textbf{morphisms} or \textbf{paths}, with $p\in\ecat{X}(x,y)$ denoted by $p\c x\to y$;
            \item for each path $p\c x\to y$, a \textbf{weight} or \textbf{cost} $|p|\in[0,\infty]$;
        \end{itemize}
        \item for each $x\in\Ob\ecat{X}$, a morphism $0\to\sum_{f\c x\to x}|f|$ in $\Sig[0,\infty]_\geq$ consisting of:
        \begin{itemize}
            \item a \textbf{constant path} $\id_x\c x\to x$,
            \item satisfying \textbf{nonpositivity}: $|\id_x\!|\leq0$, and thus $|\id_x\!|=0$;
        \end{itemize}
        \item for each $x,y,z\in\Ob\ecat{X}$, a morphism
        \[
            \sum_{f\c x\to y}\sum_{g\c y\to z}(|f|+|g|) \to \sum_{h\c x\to z}|h|
        \]
        in $\Sig[0,\infty]_\geq$ consisting of, for each $f\c x\to y$ and $g\c y\to z$:
        \begin{itemize}
            \item a \textbf{composite path} $gf\c x\to z$,
            \item satisfying the \textbf{triangle inequality}: $|gf|\leq|f|+|g|$. \qedhere
        \end{itemize}
    \end{itemize}
\end{definition}

A weighted category $\ecat{X}$ with $|\ecat{X}(x,y)|=1$ for all $x,y\in\Ob\ecat{X}$ is a Lawvere metric space \cite{lawvere}.

By \cref{thm}, a weighted category $\ecat{X}$, defined above as an enriched category, is equivalently a comonoid in $(\Sig\Pi[0,\infty]_\leq,0,\tri)$.
Then we can define a discrete dynamical system on $\ecat{X}$ in terms of the category $\Sig\Pi[0,\infty]_\leq$ as follows.

\begin{definition}
    A \textbf{\textit{discrete dynamical system}} on a weighted category $\ecat{X}$, viewed as a comonoid object $\ecat{X}\in\Sig\Pi[0,\infty]_\leq$, is a morphism $\phi\c\ecat{X}\to\infty$ in $\Sig\Pi[0,\infty]_\leq$.
    It thus consists of the following data:
    \begin{itemize}
        \item a trivial \textit{on-positions function} $\phi\c\Ob\ecat{X}\to\1$;
        \item for each point $x\in\Ob\ecat{X}$, an \textit{on-directions function} $\phi^\sharp_x\c\1\to\sum_{y\in\ecat{X}}\ecat{X}(x,y)$ that picks out a path $\phi^\sharp_x$ from $x$ to some other point,
        \item satisfying the trivial inequality $|\phi^\sharp_x|\leq\infty$. \qedhere
    \end{itemize}
\end{definition}
In other words, a discrete dynamical system on $\ecat{X}$ assigns to each point $x$ in $\ecat{X}$ a path $\phi^\sharp_x\c x\to x_1$ out of that point.
The intuition is that starting from $x$, the system moves to a new point $x_1$ along the path $\phi^\sharp_x$ in one time step.
We can ``run'' the system by taking the $n$-fold composition product $\phi\tripow{n}$ for $n\in\nn$ and composing with the canonical $n$-ary comultiplication $\delta^{n-1}$ of $\ecat{X}$ provided by its comonoid structure:%
\footnote{We inductively define $\delta^1\coloneqq\delta$ and $\delta^n\coloneqq(\id_{\ecat{X}\tripow{(n-1)}}\:\tri\:\delta)\circ\delta^{n-1}$.}
\begin{equation} \label{diag.seq}
    \ecat{X}\To{\delta^{n-1}}\ecat{X}\tripow{n}\To{\phi\tripow{n}}\infty\tripow{n}\iso\infty+\cdots+\infty\iso\infty.
\end{equation}
This is a new discrete dynamical system that assigns to each point $x$ in $\ecat{X}$ the $n$-fold composite of paths
\begin{equation} \label{diag.comp-path}
    x\To{\phi^\sharp_x}x_1\To{\phi^\sharp_{x_1}}x_2\To{\phi^\sharp_{x_2}}\cdots\To{\phi^\sharp_{x_{n-1}}}x_n
\end{equation}
from $\ecat{X}$, mapping out the evolution of the dynamical system after $n$ time steps.
Similarly, given another discrete dynamical system $\psi\c\ecat{X}\to\infty$, we can compose it with the first to obtain a third system that runs one before the other:
\[
    \ecat{X}\To\delta\ecat{X}\tri\ecat{X}\To{\phi\:\tri\:\psi}\infty\tri\infty\iso\infty.
\]

Furthermore, we could repackage the data of a discrete dynamical system as an enriched cofunctor by the following proposition, where $\ecat{X}$ is a weighted category viewed as a comonoid object $\ecat{X}\in\Sig\Pi[0,\infty]_\leq$.

\begin{proposition} \label{prop.dyn-sys-cof}
    There is a natural correspondence between discrete dynamical systems $\phi\c\ecat{X}\to\infty$ and enriched cofunctors $\Phi\c\ecat{X}\cof\prod_{n\in\nn}\infty$, whose codomain is the one-object $(\Sig[0,\infty]_\geq,0,\oplus)$-enriched category with hom-set $\nn$, addition as composition, and all weights infinite.
\end{proposition}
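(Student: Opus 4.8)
The plan is to unravel both sides into concrete combinatorial data and then exhibit a mutually inverse pair of assignments. First I would spell out an enriched cofunctor $\Phi\c\ecat{X}\cof\prod_{n\in\nn}\infty$ against the definition: since the codomain has a single object $\ast$, the on-objects function is trivial, and since its hom-set is $\nn$ with composition given by addition and unit $0\in\nn$ (whose weight is forced to be $0$ by nonpositivity, the remaining weights being $\infty$), the cofunctor amounts to choosing, for each point $x\in\ecat{X}$ and each $n\in\nn$, a path $\Phi^\sharp_x(n)\c x\to x_n$ in $\ecat{X}$ together with a weight comparison $\Phi_{x,n}\c|\Phi^\sharp_x(n)|\to|n|$ in $[0,\infty]_\leq$. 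Because the enriching base is a poset and $|n|=\infty$ for $n\geq1$, every such comparison exists and is unique, and every weight square commutes automatically; thus the only genuine data are the paths, and the only genuine constraints are the two combinatorial equations coming from preservation of identities and composites.

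Next I would extract those two equations: preservation of identities gives $\Phi^\sharp_x(0)=\id_x$, while preservation of composites gives $\Phi^\sharp_x(n+m)=\Phi^\sharp_{x_n}(m)\circ\Phi^\sharp_x(n)$, where $\circ$ is composition of paths in $\ecat{X}$. The conceptual heart is that $(\nn,+,0)$ is the free monoid on the single generator $1$: iterating the composite equation with $m=1$ shows that the whole family $\{\Phi^\sharp_x(n)\}_{n\in\nn}$ is determined by the single-step assignment $x\mapsto\Phi^\sharp_x(1)$, with $\Phi^\sharp_x(n)$ equal to the $n$-fold composite of steps displayed in \eqref{diag.comp-path}. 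But a single-step assignment $x\mapsto(\phi^\sharp_x\c x\to x_1)$, choosing one outgoing path per point, is exactly a discrete dynamical system $\phi\c\ecat{X}\to\infty$ (the weight constraint $|\phi^\sharp_x|\leq\infty$ being vacuous).

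This dictates the correspondence in both directions: from a dynamical system $\phi$ set $\Phi^\sharp_x(n)$ to be the $n$-fold iterate of $\phi^\sharp$ based at $x$ (with $\Phi^\sharp_x(0)\coloneqq\id_x$) and take all weight maps to be the unique available ones; conversely from a cofunctor $\Phi$ read off $\phi^\sharp_x\coloneqq\Phi^\sharp_x(1)$. I would then check the two assignments are mutually inverse: one composite is immediate, and the other uses that preservation of composites forces $\Phi^\sharp_x(n)$ to coincide with the iterate it was rebuilt from. The main obstacle --- really the only step needing care --- is verifying that the iterate prescription defines a bona fide enriched cofunctor, i.e.\ that $\Phi^\sharp_x(n+m)=\Phi^\sharp_{x_n}(m)\circ\Phi^\sharp_x(n)$ holds for all $n,m$ (an induction on $m$ using associativity of composition in $\ecat{X}$ together with $\cod\Phi^\sharp_x(n)=x_n$) and that identity preservation holds at $n=0$; the weight diagrams, as already noted, impose nothing since $[0,\infty]_\leq$ is a poset. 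Finally I would observe that both assignments are manifestly natural in $\ecat{X}$, yielding the claimed natural correspondence.
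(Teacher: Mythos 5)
Your proof is correct and takes essentially the same route as the paper's: both directions of the correspondence are identical (send $\phi$ to the cofunctor with $\Phi^\sharp_x(n)$ the $n$-fold iterate from \eqref{diag.comp-path}, with $\Phi^\sharp_x(0)\coloneqq\id_x$, and recover $\phi^\sharp_x\coloneqq\Phi^\sharp_x(1)$ conversely). The only difference is that you spell out the verifications---the poset structure of $[0,\infty]_\leq$ trivializing all weight diagrams, the freeness of $(\nn,+,0)$ on the generator $1$, and the induction establishing preservation of composites---which the paper compresses into ``the cofunctor laws follow immediately.''
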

\begin{proof}
    Given $\phi$, construct $\Phi$ by setting $\Phi^\sharp_x(n)$ to the composite path defined in \eqref{diag.comp-path} for $x\in\Ob\ecat{X}$ and $n\in\nn$ (with $\Phi^\sharp_x(0)\coloneqq\id_x$); the cofunctor laws follow immediately.
    Conversely, given $\Phi$, construct $\phi$ by setting $\phi^\sharp_x\coloneqq\Phi^\sharp_x(1)$.
    These constructions are natural and mutually inverse.
\end{proof}

Thus discrete dynamical systems on $\ecat{X}$ are precisely enriched cofunctors $\ecat{X}\cof\prod_{n\in\nn}\infty$.
We could generalize how these systems run by replacing $\nn$ with some other monoid, or indeed by replacing the entire codomain by a different weighted category, which could in turn be acted on via an enriched cofunctor to another weighted category, and so forth---suggesting the versatility of comonoids in $\Sig\Pi\cat{C}$ for modeling general interactions.

So far, the examples we have described could have been done in $\Sig\Pi\cat1\approx\poly$ (indeed, the material so far is adapted from \cite{spivak2020poly,polybook}); we have yet to make use of the enriched structure.
Now we will put finite weights in the codomains of our systems to bound their behavior.

\begin{definition}
    A discrete dynamical system $\phi\c\ecat{X}\to\infty$ is \textbf{\textit{bounded (above) by $r\in[0,\infty]$}} if $\phi$ factors through the morphism $r\to\infty$ in $[0,\infty]_\leq\subset\Sig\Pi[0,\infty]_\leq$.
    Equivalently, for each point $x\in\Ob\ecat{X}$, the path $\phi^\sharp_x$ has cost at most $r$. 
\end{definition}

Boundedness is well-behaved under composition: if $\phi\c\ecat{X}\to\infty$ factors through $r$ as $\ol\phi\c\ecat{X}\to r$, then the $n$-fold composition product $\phi\tripow{n}\c\ecat{X}\tripow{n}\to\infty\tripow{n}\iso\infty$ factors through $r\tripow{n}\iso nr$ as $\ol\phi\tripow{n}\c\ecat{X}\tripow{n}\to r\tripow{n}$.
Hence the $n$-fold composite dynamical system $\phi\tripow{n}\circ\delta^{n-1}$ from \eqref{diag.seq} must factor through $nr$ as well, so it is a discrete dynamical system bounded by $nr$.
This coincides with our intuition: if the cost of every time step of a dynamical system is bounded above by $r$, then the cost of $n$ successive time steps must be bounded above by $nr$.
We thus have the following result, generalizing \cref{prop.dyn-sys-cof}.

\begin{proposition} \label{prop.dyn-sys-cof-bound}
    There is a natural correspondence between discrete dynamical systems $\phi\c\ecat{X}\to\infty$ bounded above by $r\in[0,\infty]$ and enriched cofunctors $\Phi\c\ecat{X}\cof\prod_{n\in\nn}nr$, whose codomain is the one-object $(\Sig[0,\infty]_\geq,0,\oplus)$-enriched category with hom-set $\nn$, addition as composition, and weights $|n|\coloneqq nr$.
\end{proposition}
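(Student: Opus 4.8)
The plan is to recognize this as the weight-refined version of \cref{prop.dyn-sys-cof}: the underlying combinatorial data of the two correspondences are identical, and because the enriching base $[0,\infty]_\leq$ is a poset, the on-predicate maps of an enriched cofunctor carry no information beyond the assertion of an inequality. Thus the only genuinely new content is a single bound, furnished by the triangle inequality.

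First I would unwind the data of an enriched cofunctor $\Phi\c\ecat{X}\cof\prod_{n\in\nn}nr$. It consists of a trivial function on objects together with, for each point $x$ and each $n\in\nn$, a morphism $\Phi^\sharp_x(n)\c x\to x_n$ in $\ecat{X}$ and an on-predicate map $\Phi_{x,n}\c|\Phi^\sharp_x(n)|\to nr$ in $[0,\infty]_\leq$. Since $[0,\infty]_\leq$ is thin, such a map exists exactly when $|\Phi^\sharp_x(n)|\leq nr$, and is then unique; every coherence square demanded of an enriched cofunctor commutes automatically, so the cofunctor laws collapse to the combinatorial identities $\Phi^\sharp_x(0)=\id_x$ and $\Phi^\sharp_x(m+n)=\Phi^\sharp_{x_n}(m)\,\Phi^\sharp_x(n)$ already verified in \cref{prop.dyn-sys-cof}.

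For the forward direction, given $\phi$ bounded above by $r$, I would define $\Phi$ exactly as in \cref{prop.dyn-sys-cof}, taking $\Phi^\sharp_x(n)$ to be the $n$-fold composite path of \eqref{diag.comp-path} (and $\Phi^\sharp_x(0)\coloneqq\id_x$). The one new obligation is to exhibit the maps $\Phi_{x,n}$, i.e.\ to check $|\Phi^\sharp_x(n)|\leq nr$. Writing $x_0\coloneqq x$, iterating the triangle inequality along the composite gives $|\Phi^\sharp_x(n)|\leq\sum_{k=0}^{n-1}|\phi^\sharp_{x_k}|$, and each factor satisfies $|\phi^\sharp_{x_k}|\leq r$ by boundedness, so the sum is at most $nr$. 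Conversely, given $\Phi$, I would set $\phi^\sharp_x\coloneqq\Phi^\sharp_x(1)$; the supplied map $\Phi_{x,1}\c|\phi^\sharp_x|\to r$ witnesses $|\phi^\sharp_x|\leq r$, so $\phi$ is bounded above by $r$.

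Finally I would record that these two assignments are mutually inverse and natural by the argument of \cref{prop.dyn-sys-cof} verbatim, the weight data imposing no extra conditions since the on-predicate maps are forced once the inequalities hold. I anticipate no real obstacle: the entire subtlety is concentrated in the iterated triangle inequality, which is exactly the boundedness observation recorded just before the statement---namely that $\phi\tripow{n}$ factors through $r\tripow{n}\iso nr$ via $\ol\phi\tripow{n}$---while everything else is inherited from the unbounded correspondence.
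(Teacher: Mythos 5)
Your proposal is correct and follows essentially the same route as the paper's proof: reuse the construction from \cref{prop.dyn-sys-cof} verbatim and verify only the new cost constraints, namely that the $n$-fold composite path from \eqref{diag.comp-path} has cost at most $nr$ (which the paper, like you, draws from the boundedness-under-composition observation, and which you additionally spell out as an iterated triangle inequality) and that conversely $|\phi^\sharp_x|=|\Phi^\sharp_x(1)|\leq r$. Your extra unwinding of the enriched-cofunctor data over the thin base $[0,\infty]_\leq$ is a more explicit rendering of what the paper leaves implicit, not a different argument.
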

\begin{proof}
    The construction mirrors the one in the proof of \cref{prop.dyn-sys-cof}; we need only verify that the additional restrictions on costs are satisfied.
    Given $\phi$ bounded by $r$, the $n$-fold composite path from \eqref{diag.comp-path} has cost at most $nr$, ensuring $|\Phi^\sharp_x(n)|\leq nr$.
    Conversely, given $\Phi$, we have $|\phi^\sharp_x|=|\Phi^\sharp_x(1)|\leq r$.
\end{proof}

The preceding material is only a sample of how $\Sig\Pi[0,\infty]_\geq$ and, by extension, $\Sig\Pi\cat{C}$ may be used to model compositional behavioral patterns of dynamical systems.
We could generalize the codomain of our discrete dynamical systems beyond one-position, one-direction polynomials in $\cat{C}$; we could generalize $\cat{C}$ beyond mere posets; and so forth.
Indeed, \cref{prop.dyn-sys-cof,prop.dyn-sys-cof-bound} are special cases of a far more general result.

\begin{theorem} \label{thm.cofree}
The forgetful functor from comonoids in $(\Sig\Pi\cat{C},e,\tri)$ to their underlying polynomials has a right adjoint, yielding cofree $(\Sig\cat{C}\op,e,\odot)$-enriched categories on polynomials in $\cat{C}$.
\end{theorem}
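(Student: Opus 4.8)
The goal is to construct a right adjoint to the forgetful functor $U$ sending a comonoid in $(\Sig\Pi\cat{C},e,\tri)$ to its underlying polynomial. By the equivalence in \cref{thm}, comonoids are the same as small $(\Sig\cat{C}\op,e,\odot)$-enriched categories, so it suffices to build, for each polynomial $p = \sum_{i\in I}\prod_{a\in A_i}c_{i,a}$ in $\cat{C}$, a cofree enriched category $\mathrm{Cof}(p)$ together with a counit $\eps\c U(\mathrm{Cof}(p))\to p$ satisfying the universal property: every morphism $q \to p$ in $\Sig\Pi\cat{C}$ from an underlying polynomial of a comonoid $q$ factors uniquely through $\eps$ by a comonoid morphism. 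The plan is to guess the cofree construction by analogy with the polynomial case $\cat{C}=\cat1$, where the cofree comonad on a polynomial is the category of \emph{rooted paths}, i.e.\ the category whose objects are positions and whose morphisms are finite sequences of composable directions (a tree/path construction). I would generalize this by taking the objects of $\mathrm{Cof}(p)$ to be ``states,'' where a state records a position of $p$ together with the directions chosen so far, and morphisms to be finite paths through $p$, with weights given by the monoidal product in $\cat{C}$ of the predicates encountered along the path.

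Concretely, I would define the objects of $\mathrm{Cof}(p)$ to be pairs (or more precisely finite histories) tracking a current position, and a morphism of length $n$ from one state to another to be a choice of $n$ successive directions; its weight is the $\cdot$-product $c_{i_0,a_0}\cdots c_{i_{n-1},a_{n-1}}$ of the predicates traversed, living in $\cat{C}$ and hence in $\Sig\cat{C}\op$. Composition concatenates paths, with the composite map $\mu$ supplied by the associator/multiplication turning a product over a concatenated path into the $\odot$-product of the two sub-path weights; identities are empty paths of weight $e$. The enriched-category axioms (unitality, associativity) then reduce to the monoidal coherence of $(\cat{C},e,\cdot)$ together with the strict associativity of path concatenation, so they hold essentially for free. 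The counit $\eps\c U(\mathrm{Cof}(p))\to p$ reads off the length-$1$ data: on positions it sends a state to its current position of $p$, on directions it picks out the single chosen direction, and on predicates it is the identity map, matching the counit construction in the proof of \cref{thm}.

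To verify the universal property, I would take an arbitrary comonoid $q$ (equivalently an enriched category $\ecat{A}$) with a polynomial morphism $f\c U(q)\to p$ and show there is a unique comonoid morphism $\tilde f\c q\to \mathrm{Cof}(p)$ with $\eps\circ U(\tilde f)=f$. The idea, exactly as in \cref{prop.dyn-sys-cof}, is that $f$ specifies how a single morphism out of each object of $\ecat{A}$ maps into $p$, and the comonoid (enriched-category) structure on $q$ lets us iterate: $\tilde f$ sends a length-$n$ morphism of $\ecat{A}$ to the length-$n$ path in $p$ obtained by applying $f$ to each of its composites, using the comultiplication $\delta^{n-1}$ to decompose an $n$-step morphism into $n$ one-step pieces. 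Uniqueness is forced because any comonoid morphism lifting $f$ must agree with $f$ on the length-$1$ data (by the counit condition) and must respect comultiplication (by being a comonoid morphism), and these two constraints determine the assignment on all path lengths inductively.

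The main obstacle will be verifying that this iterated assignment $\tilde f$ is genuinely a comonoid morphism, i.e.\ that it commutes with comultiplication at every level rather than merely on length-$1$ data; this is where the coassociativity encoded by $\delta^{n-1}$ must be shown compatible with path concatenation in $\mathrm{Cof}(p)$, and where the weight maps must be checked to assemble coherently under the associator of $\cat{C}$. I expect this to be a careful but ultimately routine induction on path length, mirroring the compatibility arguments in the proof of \cref{thm}; the boundedness refinements of \cref{prop.dyn-sys-cof,prop.dyn-sys-cof-bound} are then recovered as the special cases $p=\infty$ and $p=r$, confirming that the construction specializes correctly.
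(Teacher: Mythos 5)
Your overall strategy matches the paper's: transfer the problem along \cref{thm}, generalize the cofree-comonad construction on $\poly$ from \cite{polybook} by assigning each tuple of directions the $\cdot$-product of the predicates it traverses, define the counit by reading off length-one data, and get existence and uniqueness of the lift by iterating the comultiplication. The universal-property argument and the observation that \cref{prop.dyn-sys-cof,prop.dyn-sys-cof-bound} are the special cases $p=\infty$ and $p=r$ are all sound in outline.

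However, there is a genuine gap in your identification of the objects of $\mathrm{Cof}(p)$. You take them to be positions of $p$, or ``states recording a position of $p$ together with the directions chosen so far,'' i.e.\ finite \emph{histories}. This cannot work, because a polynomial---unlike a category---assigns no codomain to a direction: given a position $i$ and a direction $a\in A_i$, nothing in $p$ says at which position you arrive next. Consequently, with history-objects the phrase ``finite sequences of composable directions'' is not even well defined, and the counit $\eps\colon U(\mathrm{Cof}(p))\to p$ breaks down: on directions it must send each $a\in A_i$ to a morphism out of the given object, i.e.\ to a length-one path, but such a path needs a chosen endpoint and there is no canonical one. The correct objects---and this is what the construction in \cite{polybook} that the paper's sketch points to actually uses, also in the case $\cat{C}=\cat{1}$---are $p$-\emph{trees}: an object is a (generally infinite) tree specifying a root position and, coinductively, for each direction at that position a subtree, so that every finite sequence of direction choices is assigned the position it reaches. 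Morphisms out of a tree are its finite rooted paths (the paper's ``tuples of directions''), the codomain of a path is the subtree rooted at its endpoint, and its weight is the $\cdot$-product of the predicates along it. In short, an object must encode all possible \emph{futures}, not the \emph{past}. With this correction, your counit, your lift $\tilde f$ (which sends an object $x$ of a comonoid to its unfolding tree under $f$), and your uniqueness induction go through as you describe; and in your special cases $p=\infty$ and $p=r$ the polynomial has a single position and a single direction, so there is exactly one tree---which is precisely why the history/tree distinction was invisible there.
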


\begin{proof}[Sketch of proof]
The construction follows the analogous result for cofree polynomial comonads as detailed in \cite{polybook}.
There the cofree category on a given polynomial has tuples of the polynomial's directions as morphisms; we then assign each tuple a weight in $\cat{C}$ equal to the monoidal product of the predicates of the directions in the tuple.
\end{proof}

\section{Future directions} \label{sec.future}

We close with future directions for research in addition to the potential applications already suggested.

\subsection*{Foundations of polynomial categories}

Spivak surveys categorical properties and structures on $\poly\approx\Sig\Pi\cat1$ in \cite{poly-ref}; in addition to those we have already covered, it would be instructive to examine which of these properties and structures carry over to $\Sig\Pi\cat{C}$, perhaps requiring various conditions on $\cat{C}$.
Similarly, we could investigate how known structures on $\dial(\sets)\approx\hmg(\cat2)$ carry over to $\hmg(\cat{C})$.

\subsection*{Interaction between monoidal structures on polynomials}

Spivak observed that $\tri$ is duoidal over $\otimes$ in the case of $\cat{C}\coloneqq\cat1$, i.e.\ there is a natural transformation $(-\tri-)\otimes(-\tri-)\to(-\otimes-)\tri(-\otimes-)$ satisfying various coherence conditions \cite{spivak2020poly}.
Shapiro and Spivak go on to leverage this duoidality to model compositional dependence \cite{duoidal}.
We hope to generalize their results to the parallel and compositional products on $\Sig\Pi\cat{C}$.

\subsection*{Other monoidal structures on polynomials}

Given a monoidal category $(\cat{C},e,\cdot)$, there are at least two other monoidal structures on $\Sig\Pi\cat{C}$ with unit $e$: one given by
\[
    \Big(\sum_{i\in I}\prod_{a\in A_i}u_{i,a}\Big)\Bowtie\Big(\sum_{j\in J}\prod_{b\in B_j}v_{j,b}\Big)\coloneqq\sum_{i\in I}\sum_{j\in J}\,\prod_{a\c J\to A_i}\,\prod_{b\c I\to B_j}(u_{i,aj}\cdot v_{j,bi})
\]
and another given by
\[
    \Big(\sum_{i\in I}\prod_{a\in A_i}u_{i,a}\Big)\rtimes\Big(\sum_{j\in J}\prod_{b\in B_j}v_{j,b}\Big)\coloneqq\sum_{i\in I}\sum_{j\in J}\,\prod_{a\c J\to A_i}\,\prod_{b\in B_j}(u_{i,aj}\cdot v_{j,b}).
\]
We would like to know if there are interpretations or applications for these monoidal products as there are for the parallel and composition products.

\subsection*{Recovering categories and cofunctors enriched over any distributive category}

\cref{thm} recovers the category of small categories and cofunctors enriched over a free coproduct completion with Day convolution as the category of comonoids of a particular monoidal category.
Yet Clarke and Di Meglio described how cofunctors may be enriched over any distributive monoidal category \cite{enr-cof}.
The free coproduct completion with Day convolution gives us a way to freely construct a distributive monoidal category from any monoidal category, but not every distributive monoidal category arises this way.
We would like to know if our theorem may be generalized to recover categories of small categories and cofunctors enriched over any distributive monoidal category as some category of comonoids.

\newpage



\printbibliography
\end{document}